\newtheorem{defi}{Definition}[section] 
\newtheorem{thm}[defi]{Theorem}
\newtheorem{prop}[defi]{Proposition} 
\newtheorem{lem}[defi]{Lemma}
\newtheorem{rem}[defi]{Remark}
\newtheorem{cor}[defi]{Corollary}
\newtheorem{conj}[defi]{Conjecture}
\newtheorem{note}[defi]{Note}
\newtheorem{ex}[defi]{Example}
\newtheorem{const}[defi]{Construction}
\renewcommand{\proofname}{Proof.}
\newcommand{\slmc}[1]{{\rm SL}(#1,\mathbb{C})}%特殊線型群
\newcommand{\glmc}[1]{{\rm GL}(#1,\mathbb{C})}%一般線型群
\newcommand{\compl}[1]{\mathbb{C}^{#1}}%複素数
\newcommand{\real}[1]{\mathbb{R}^{#1}}%実数
\newcommand{\ratio}[1]{\mathbb{Q}^{#1}}%有理数
\newcommand{\inte}[1]{\mathbb{Z}^{#1}}%整数
\newcommand{\GCD}{\mathrm{GCD}}%rank
\newcommand{\Spec}{\mathrm{Spec}}%Spec
\renewcommand{\mod}{\mathrm{mod}}%mod
\newcommand{\Hom}{\mathrm{Hom}}%Hom
\newcommand{\AHilb}{A\mathrm{\text{-}Hilb}(\mathbb{C}^3)}%A-Hilb
\newcommand{\GHilb}[1]{G\mathrm{\text{-}Hilb}(\mathbb{C}^{#1})}%G-Hilb
\newcommand{\C}{{\mathbb C}}
\newcommand{\R}{{\mathbb R}}
\def\mapstofill@{%
   \arrowfill@{\mapstochar\relbar}\relbar\rightarrow}
\newcommand*\xmapsto[2][]{%
   \ext@arrow 0395\mapstofill@{#1}{#2}}
\begin{document}

\begin{center} %Title
\Large{\bf Special vs Essential}
\end{center}

\vspace{10pt}

\begin{center} %Author
\small{\bf Yukari Ito$^{1}$, Kohei Sato$^{1,2}$ and Yusuke Sato$^{1,3}$}
\end{center}

\vspace{10pt}
\begin{center}
{\scriptsize %Affiliations
\noindent
$^{1}$ {\it Kavli Institute for the Physics and Mathematics of the Universe (WPI), \\
\hspace{7pt}The University of Tokyo, 5-1-5 Kashiwanoha, Kashiwa, Chiba, 277-8583, Japan.}\\
$^{2}$ {\it National Institute Of Technology, Oyama College,\\
\hspace{7pt}771 Nakakuki, Oyama, Tochigi, 323-0806, Japan.}\\
$^{3}$ {\it Department of Mathematics, Osaka Institute of Technology, \\
\hspace{7pt}5-16-1 Ohmiya, Asahi-ku, Osaka, 535-8585, Japan.}\\
\hspace{7pt}
\vspace{5pt}\\
\hspace{7pt}{\it E-mail:}\ yukari.ito@ipmu.jp,\ k-sato@oyama-ct.ac.jp,\ yusuke.sato@oit.ac.jp.
}
\end{center}
\vspace{20pt}

\noindent{\small 
{\bf {Abstract.}} We show a correspondence between the compact exceptional curves and divisors on $\GHilb{3}$ and some non-trivial irreducible representations of $G\subset \glmc{3}$ which are special (or essential). Moreover, we provide an explicit construction of the small resolution of  $\GHilb{3}$ and, using this resolution, we construct a correspondence between special and essential representations. These results are an extension of ``Special McKay correspondence'' and ``Reid's recipe''.

\section{Introduction}%%%%%%%%%%%%%%%%%%%%%%%%%%%%%%%%%%%%%%%%%%%%%%%%%%%%%%%%%%%%%%%%%%%%%%%%%%%%%%%%%%%%%%%%%
\noindent

The relation between the representation theory of finite groups and algebraic geometry has long been a central theme in mathematics. One of its origins lies in the observation of McKay~\cite{Mckay}, who discovered a correspondence between finite subgroups 
\( G \subset {\rm SL}(2, \mathbb{C}) \) and the configuration of exceptional curves in the minimal resolution of the quotient singularity \( \mathbb{C}^2 / G \). 
This phenomenon, later known as the \emph{McKay correspondence}, was formulated as an equivalence of derived categories by Bridgeland, King, and Reid~\cite{BKR}. 
In the two-dimensional case, the correspondence is now completely understood, and explicit constructions of $G$-Hilbert scheme have been given by the first author and Nakamura~\cite{IN},  Ishii~\cite{Ishii} and Kidoh~\cite{Kidoh}.

In three dimensions, the situation becomes considerably more intricate and the reader can see the survey by the first author~\cite{Ito} for more general results and the history.
Reid~\cite{Reid2} proposed a geometric procedure, now called \emph{Reid's recipe}, for Gorenstein threefold quotient singularities, establishing a correspondence between certain irreducible representations of 
\( G \subset {\rm SL}(3, \mathbb{C}) \) and the exceptional divisors of the crepant resolution. 
Craw~\cite{Craw} later provided an explicit realization of this construction
for abelian subgroups of ${\rm SL}(3,\mathbb{C})$, giving a concrete algorithm
for computing $\GHilb{3}$, building on the joint work with Reid~\cite{CrawReid}.
When $G \subset {\rm GL}(3,\mathbb{C})$ is not contained in ${\rm SL}(3,\mathbb{C})$,
 $G$-Hilb$(\mathbb{C}^3)$ is not necessarily smooth.
In particular, for the cyclic terminal quotient singularities of type
$\frac{1}{r}(1,a,r-a)$ with $\gcd(a,r)=1$, which are the class considered in this paper,
$G$-Hilb$(\mathbb{C}^3)$ may have singularities.
Kedzierski~\cite{Kedzierski} showed that in this case, the scheme admits only toric conifold singularities. 
Consequently, the original Reid's recipe cannot be directly applied, and a modified approach is required.

The purpose of this paper is to construct explicitly a \emph{small resolution} of \( \GHilb{3} \) and to establish a new correspondence between exceptional curves or divisors and certain nontrivial irreducible representations of \( G \). 
These representations are classified into two distinct but related types, called \emph{special} and \emph{essential} representations. 
Our main result demonstrates a one-to-one correspondence between them, thereby extending the \emph{Special McKay correspondence} from the two-dimensional case to the three-dimensional terminal setting.

More precisely, starting from the combinatorial description of \( \GHilb{3} \) given by Kedzierski~\cite{Kedzierski} using \( G \)-graphs and \( G \)-igsaw transformations, we study quotient singularities of type \( \tfrac{1}{r}(1,a,r-a) \). 
By subdividing the parallelogram-shaped regions in the toric fan via a diagonal line parallel to the remaining side of the corresponding triangle, we obtain a smooth small resolution
\[
\widetilde{\GHilb{3}} \longrightarrow \GHilb{3}.
\]
%On this resolved space　←もともとの文
Via this small resolution, an analogue of Reid's recipe can be successfully carried out, allowing a geometric description of the representations attached to each exceptional divisor of $\GHilb{3}$.

%Recall that $G\mathchar`-\Hilb(\mathbb{C}^3)$ is the moduli space of $G$-clusters $Z\subset \mathbb{C}^3$ with $H^0(\mathcal{O}_Z)\cong \mathbb{C}[G]$.

Second, we introduce two complementary classes of characters:
\begin{itemize}
  \item \emph{Essential characters}, defined via the socle of the $G$-cluster associated with a $G$-graph (equivalently, the minimal monomial generators that survive across adjacent cones).
  \item \emph{Special characters}, defined on the toric fan of $\GHilb{3}$ by decorating vertices according to their valency (three, four, or five), with the decoration determined by the monomial ratios of the incident edges.
\end{itemize}
Special representation was defined by Wunram~\cite{Wunram} for two dimensional quotient singualrties, and essential representation was defined by Takahashi\cite{Takahashi} and developed by Craw, Karmazyn with the first author~\cite{CIK}.

These two labellings provide, respectively, the algebraic and geometric sides of our dictionary. 
Our main theorem shows that they coincide up to the tautological twist by $\chi_1$:
\\

\begin{thm}[Theorem~\ref{thm:es-sc-terminal}]
Let $G$ be a cyclic group of type $\frac{1}{r}(1,a,r-a)$. For a compact divisor $D$ of $\GHilb{3}$, we have  ${\rm SC}(D)={\rm EC}(D)\otimes \chi_1$, where ${\rm EC}(D)\otimes\chi_1 := \{\chi\otimes\chi_1 \mid \chi\in {\rm EC}(D)\}$
\end{thm}

This equality establishes a correspondence between special and essential characters. 
It extends the Special McKay correspondence to the case of three-dimensional terminal singularities.
In the Gorenstein case $G \subset {\rm SL}(3,\mathbb{C})$, special and essential characters coincide. Thus, our result is compatible with Reid’s recipe in this case. 

Moreover, the above correspondence has a geometric consequence in terms of tautological line bundles on $X:=\GHilb{3}$.
For each nontrivial character $\chi\in G^\vee$, let
\[
D_\chi:=\bigcup_{\substack{D\subset X\\ \chi\in {\rm SC}(D)}} D
\]
be the union of compact exceptional divisors marked by $\chi$, and let $R_\chi$ be the corresponding tautological line bundle.
We show that $D_\chi$ is connected and that
\[
R_\chi|_{D_\chi}\cong \mathcal O_{D_\chi}.
\]
Thus, special characters are detected geometrically by the triviality of tautological line bundles along connected exceptional loci.\\
Finally, we propose a conjectural generalization (Conjecture~\ref{ess-sp-conj}) asserting that a similar correspondence should hold for arbitrary cyclic groups \( G = \tfrac{1}{r}(a,b,c) \subset {\rm GL}(3,\mathbb{C}) \), where \( {\rm EC}(a+b+c) \) is determined by the exceptional curves of \( \GHilb{3} \).

\medskip

The structure of the paper is as follows. 
Section~2 reviews the combinatorial construction of $ \GHilb{3} $ based on the works of Nakamura~\cite{Nakamura} and Kedzierski~\cite{Kedzierski}, focusing on the case of terminal quotient singularities of type \( \tfrac{1}{r}(1,a,r-a) \). 
We recall the definition of $G$-graphs, the corresponding toric fans, and their transformations, and then construct explicitly the small resolution $\widetilde{\GHilb{3}} $ by subdividing the parallelogram cones. 
Section~3 introduces essential and special characters, analyzes their behavior in each local configuration of the fan, and proves the main correspondence theorem. 
In particular, the cases of valency three, four, and five vertices are treated in detail, showing how the tensor product structure of the characters reflects the local toric geometry. We study tautological line bundles on $\GHilb{3}$ and show that the special characters give rise to trivial restrictions on connected exceptional loci.
Finally, explicit computations for the examples $ G = \tfrac{1}{10}(1,3,7) $ and $ G = \tfrac{1}{7}(1,2,3) $ confirm the theoretical correspondence and illustrate the conjectural extension.
This work thus provides a new geometric–representation-theoretic bridge for three-dimensional \emph{non-Gorenstein} quotient singularities. 

\medskip

\vspace{10pt}

\noindent
{\bf\large Acknowledgment.} 

The authors would like to express their sincere gratitude to the Kavli Institute for the Physics and Mathematics of the Universe (Kavli IPMU) at the University of Tokyo for providing an inspiring and supportive research environment where this collaborative work was developed.  
The collaboration among the three authors was carried out while all were affiliated with the Kavli IPMU, and the stimulating atmosphere of discussions there greatly influenced the progress of this research.  
This work was partially supported by the Kavli IPMU(WPI) and by JSPS Grants-in-Aid for Scientific Research (24H00180).
The authors gratefully acknowledge this financial and institutional support.

\section{Small resolutions over $\GHilb{3}$}%%%%%%%%%%%%%%%%%%%%%%%%%%%%%%%%%%%%%%%%%%%%%%%%%%%%%%%%%%%%%

%\subsection{$\Hilb^{G}\compl{3}$ for toric terminal 3-folds}\label{classification}%%%%%%%%%%%%%%%%%%%%%%%%%%%%
I. Nakamura introduced {\it $G$-graphs} $\Gamma$ to construct the normalization $\GHilb{3}$ of an irreducible component of the $G$-Hilbert scheme, and described some relations between $G$-graphs by using {\it valleys}, {\it $G$-igsaw transformations} and so on in the paper \cite{Nakamura}. For the toric terminal threefold $\compl{3}/G$, O. Kedzierski presented more detailed configuration method of $\GHilb{3}$ by introducing {\it primitive $G$-sets} ($=$ {\it $G$-graph}) $\Gamma_i$, {\it triangles of transformation of $\Gamma$} denoted by $\Theta(\Gamma)$ and so on in the result \cite{Kedzierski}. By Kedzierski's result, $\GHilb{3}$ is not smooth in general in the case of toric terminal threefolds, and if $\GHilb{3}$ is singular, then the singularities are only toric conifolds. The symbols follow the conventions of the references \cite{Nakamura} and \cite{Kedzierski} in this section.

First, let us review the definition of $G$-graph.
Let $S=\mathbb{C}[x_1,\dots,x_n]$ denote the coordinate ring of $\mathbb{C}^n$ and $\mathcal{M}$ be the set of all monomials in $S$. Let $\chi$ be a character of $G$, and $G^{\vee}$ be the character group of $G$. For $u=(u_1, \dots, u_n) \in \mathbb{Z}_{\geq0}^n$, we denote by $X^u$ the monomial $x_1^{u_1}\cdots x_n^{u_n}$ in $\mathcal{M}$. \par
We write ${\rm wt}(X^u)=\chi$ if $X^u(g\cdot p)=\chi(g)X^u(p)$ holds for any $g \in G$ and $p \in \mathbb{C}^n$. Since such $\chi$ is determined uniquely for each monomial, we get a map ${\rm wt}: \mathcal{M} \to G^{\vee}$.
We define $G$-graphs as in Nakamura's paper \cite{Nakamura}. 

\begin{defi}\label{ggraph}
{\rm For a monomial ideal $I \subset S$, the subset $\Gamma(I) \subset \mathcal{M}$ is defined by $\{X \in \mathcal{M} \mid X \notin I \} $.
A subset $\Gamma(I)$ is called a} $G$-graph {\rm if the restriction map ${\rm wt}: \Gamma(I) \to G^{\vee}$ is bijective.}
\end{defi}

We note that there exists a unique element ${\rm wt}_{\Gamma}(X^u)=X^{u'}$ such that $X^{u'} \in \Gamma$ and ${\rm wt}(X^u)={\rm wt}(X^{u'})$ for any $X^u \in S$ and $G$-graph $\Gamma$. 
Next, we construct the toric variety defined by $G$-graph. Let $G$ be a cyclic group of type $\frac{1}{r}(a_1,\dots,a_n)$.
We set $N := \inte{n} + \inte{}\cdot \frac{1}{r}(a_1,\dots,a_n)\ \subset\ \ratio{n}, N_\real{} := N\otimes_{\inte{}}\real{}.$
We denote by $M$ the dual lattice of $N$, namely $M := \Hom_{\inte{}}(N,\inte{}), M_\real{} := M\otimes_{\inte{}}\real{}.$

\begin{defi}\upshape \label{sigma} 
Assume that $\Gamma$ is a $G$-graph.
 Let $A_{\Gamma}$ be the set of minimal generators of $I(\Gamma):=\left<X^u \in S | X^u \notin \Gamma \right>$. We define $S(\Gamma)$ to be the subsemigroup of $M$ generated by $\frac{m\cdot x}{{\rm wt}_{\Gamma}(m\cdot x)}$ for all $m \in \mathcal{M}$ and $x \in \Gamma$.
In addition, we define the rational cone
$$
\sigma(\Gamma):=\{w \in N_{\mathbb{R}} \mid w \cdot X^u \geq w \cdot {\rm wt}_{\Gamma}(X^u) \  {\rm for\  all}\  X^u \in A_{\Gamma}\},
$$
where $w \cdot X^u$ means the standard inner product $w \cdot u$ in $\mathbb{R}^n$. We denote by $\mathbb{C}[S(\Gamma)]$ the semigroup algebra of the semigroup $S(\Gamma)$.
\end{defi}
$\mathrm{Fan}(G)$ denotes the fan defined by all $n$-dimensional closed cones $\sigma(\Gamma)$ and all their faces in $N_{\mathbb{R}}$. The following theorem says that we can calculate an irreducible component $\GHilb{3}$ of $G$-Hilbert scheme $\GHilb{n}$ dominating $\compl{n}/G$ by using $G$-graph.

%\begin{thm}[Theorem 2.11 in \cite{Nakamura}]\label{Nakamura} The following hold.
%\begin{itemize}
%\item $\mathrm{Fan}(G)$ is a finite fan with the support $\Delta$.
%\item The normalization of $\GHilb{n}$ is isomorphic to the toric variety defined by $\mathrm{Fan}(G)$.
%\end{itemize}
%\end{thm}

\begin{thm}[Theorem 3.10 in {\cite{Kedzierski}}]\label{thm:fan-normalization}
Let $G\subset GL(3,\C)$ be a finite abelian subgroup and let $\Gamma$ be a $G$-graph (equivalently, a $G$-set).
Assume that the cone $\sigma(\Gamma)\subset N_{\R}$ is $3$-dimensional.
When $\Gamma$ varies through all $G$-graphs, the set of all faces of all $3$-dimensional cones $\sigma(\Gamma)$
forms a fan ${\rm Fan}(G)$ in $N_{\mathbb{R}}$ supported on the positive octant.
Moreover, the toric variety $X_{{\rm Fan}(G)}$ is isomorphic to the normalization of $\GHilb{3}$.
Furthermore, the affine varieties
\[
V(\Gamma):=\Spec \C[S(\Gamma)]
\]
form an open covering of $\GHilb{3}$ when $\Gamma$ varies through all $G$-graphs.
\end{thm}

\begin{rem}\upshape \label{rem:dim-assumption}
The statement corresponding to Theorem~\ref{thm:fan-normalization} is written in \cite{Nakamura}
without the assumption $\dim\sigma(\Gamma)=3$, but it is false in general (see Remark 3.12 in \cite{Kedzierski} and Example 4.12 in \cite{CMT}).
\end{rem}

\begin{cor}\upshape \label{cor:cyclic-case-dim3}
In the cyclic case $G=\frac{1}{r}(1,a,r-a)$, the cone $\sigma(\Gamma)$ is
$3$-dimensional for every $G$-graph $\Gamma$ by Lemma 3.13 in \cite{Kedzierski}.
Hence Theorem \ref{thm:fan-normalization} applies to our setting.
\end{cor}

In Nakamura's paper the notation ${\rm Hilb}^{G}(\compl{n})$ is used. 
However, for the group $G$ considered in this paper, ${\rm Hilb}^{G}(\compl{n})$ coincides with $\GHilb{n}$. 
Therefore, we shall write $\GHilb{n}$ throughout.

%In this section, we recall his result, and after that we consider a smooth toric resolution $\widetilde{\GHilb{3}}$ over the $\GHilb{3}$ to construct a correspondence between the essential representations of $G$ and the exceptional divisors with compact support in $\widetilde{\GHilb{3}}$.

%\begin{thm}[Theorem 2.11 in \cite{8e}]
%Let $G$ be a finite abelian subgroup of $\mathrm{{\rm GL}}(3,\compl{})$. When $\Gamma$ varies through all $G$-set, the set of all faces of three dimensional cones $\sigma(\Gamma)$ forms a fan, and every element in the fan is in the positive octant of $N_{\mathbb{R}}$. The toric variety given by this fan is isomorphic to the normalization of an irreducible component $\mathrm{Hilb}^{G}(\compl{3})$ of $G$-Hirbert scheme $G\mathrm{-Hilb}(\compl{3})$. Moreover, the affine varieties $V(\Gamma)$ form an open covering $\bigcup_{\Gamma} V(\Gamma)$ of $\mathrm{Hilb}^{G}(\compl{3})$ where $\Gamma$ varies through all the $G$-set.
%\end{thm}

From here, we would like to focus on the quotient singularity of type $\frac{1}{r}(1, a, r-a)$ where $\GCD(r, a)=1$. This is the unique class of quotient singularities which is toric terminal threefold. In this case, O. Kedzierski showed the structure of $\GHilb{3}$ by giving classification of $G$-sets using {\it valleys}. Valleys are introduced by I. Nakamura originally in \cite{Nakamura}. However, since this paper uses only specific valleys (named {\it $y$-valley} and {\it $z$-valley}), we introduce the definition written in Kedzierski's result \cite{Kedzierski}.

\begin{defi}\upshape
Let $\Gamma$ be a $G$-set. A monomial $x^m y^n$(resp. $x^m z^n$) in $\Gamma$ is called a {\it $y$-valley} (resp.  {\it $z$-valley} ) of  $\Gamma$, if 
$$
x^m y^n,\ x^{m+1} y^n,\ x^m y^{n+1} \in \Gamma\ \text{but}\ x^{m+1} y^{n+1} \notin \Gamma.
$$
$$
\text{(resp.}\ x^m z^n,\ x^{m+1} z^n,\ x^m z^{n+1} \in \Gamma\ \text{but}\ x^{m+1} z^{n+1} \notin \Gamma.\ \text{)}
$$
\end{defi}

Kedzierski showed that the number of valleys in a $G$-set is at most two in this case in Lemma 3.3. in \cite{Kedzierski}, and the number  indicates whether the corresponding affine chart is smooth or not by the following lemma.

\begin{lem}[Lemma 3.13 in \cite{Kedzierski}]\label{classification 1}
In the case of the quotient singularity of type $\frac{1}{r}(1,a,r-a)$, the cone $\sigma(\Gamma)$ is three-dimensional. Moreover, the following holds:\\
(1) $\compl{}[S(\Gamma)]\cong \compl{}[x,y,z]$, if $\Gamma$ has $0$ or $1$ valley,\\
(2) $\compl{}[S(\Gamma)]\cong \compl{}[x,y,z,w]/(xy-zw)$, if $\Gamma$ has $2$ valleys.
\end{lem}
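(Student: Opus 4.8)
The plan is to pass to toric geometry and read off the local ring of the chart from the combinatorial type of $\sigma(\Gamma)$. From its definition every element of $S(\Gamma)$ is a $G$-invariant Laurent monomial, and one checks that $S(\Gamma)=\sigma(\Gamma)^{\vee}\cap M$, so that $\Spec\mathbb{C}[S(\Gamma)]=U_{\sigma(\Gamma)}$ is the affine toric chart of $\sigma(\Gamma)$. Three-dimensionality follows because $\Gamma\to G^{\vee}$ is a bijection, so $I(\Gamma)$ has finite colength $r$: writing $u'=\mathrm{wt}_{\Gamma}(X^{u})$ for each minimal generator $X^{u}\in A_{\Gamma}$, the vectors $u-u'\in M$ span a full-dimensional pointed cone, whose dual $\sigma(\Gamma)$ is then strongly convex of dimension three (consistent with Theorem~\ref{Nakamura}, which presents $\sigma(\Gamma)$ as a maximal cone of a fan supported on the full orthant $\Delta$). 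The statement now reduces to two standard toric facts: a strongly convex rational $3$-cone gives a smooth chart $\cong\mathbb{C}^{3}$ exactly when it is simplicial and unimodular, and gives the conifold $\mathbb{C}[x,y,z,w]/(xy-zw)$ exactly when it is combinatorially a cone over a square. Thus the lemma becomes a count of the extremal rays of $\sigma(\Gamma)$ --- three versus four --- governed by the number of valleys.

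The combinatorial heart is to match valleys with facets. Each $X^{u}\in A_{\Gamma}$ contributes the supporting inequality $w\cdot(u-u')\ge 0$, and the facets of $\sigma(\Gamma)$ are the tight ones among these. I would show that a $y$-valley at $x^{m}y^{n}$ forces $x^{m+1}y^{n+1}\in A_{\Gamma}$, whose reduction $\mathrm{wt}_{\Gamma}(x^{m+1}y^{n+1})$ must involve $z$ (no pure monomial in $x,y$ of that weight survives in $\Gamma$), so the vector $u-u'$ yields a genuinely new extremal ray; symmetrically for $z$-valleys. The claim to be proved is the dichotomy that at most one valley leaves $\sigma(\Gamma)$ with a triangular cross-section (three rays), whereas two valleys produce a quadrilateral cross-section (four rays). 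Kedzierski's Lemma~3.3 bounds the number of valleys by two, so these exhaust the cases.

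It then remains to treat the two cases. With $0$ or $1$ valley, $\sigma(\Gamma)$ is simplicial on three primitive generators, and I would check that the determinant of these three vectors is $\pm1$; unimodularity gives $U_{\sigma(\Gamma)}\cong\mathbb{C}^{3}$, hence $S(\Gamma)\cong\mathbb{C}[x,y,z]$. With $2$ valleys, the two valley-generators and the two remaining boundary generators cut out four facets arranging $\sigma(\Gamma)$ as a cone over a square; its four corner rays then form a Hilbert basis $x,y,z,w$ of $\sigma(\Gamma)^{\vee}\cap M$ obeying the single relation $x+y=z+w$ between opposite corners, i.e.\ $S(\Gamma)\cong\mathbb{C}[x,y,z,w]/(xy-zw)$ as in the statement.

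The step I expect to be the main obstacle is this valley-to-ray count: extracting the exact list $A_{\Gamma}$ from the shape of the $G$-graph and proving both the ray count and the unimodularity of the smooth case. This rests on the explicit Nakamura--Kedzierski description of the $G$-graphs for $\frac{1}{r}(1,a,r-a)$ and on arithmetic with the weights $1,a,r-a$ modulo $r$; terminality is precisely what forbids a cross-section with five or more vertices, thereby confining the singularities to smooth or conifold type.
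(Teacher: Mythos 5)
First, a point of context: the paper itself offers no proof of this statement --- it is imported verbatim as Lemma 3.13 of Kedzierski \cite{Kedzierski}, where it is proved by an explicit computation of generators and relations of the semigroup $S(\Gamma)$ from the shape of the $G$-graph. So the comparison here is between your sketch and Kedzierski's argument, not a proof appearing in this paper.

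Your proposal is a sensible strategy outline, but it is not a proof: the two steps that constitute the actual content of the lemma are deferred with ``I would show'' and ``I would check.'' Concretely, (a) the dichotomy ``$0$ or $1$ valley $\Rightarrow$ three extremal rays, $2$ valleys $\Rightarrow$ four extremal rays,'' and (b) the unimodularity of the simplicial case together with the lattice structure of the four-ray case, are precisely what must be extracted from the explicit description of $G$-graphs for $\frac{1}{r}(1,a,r-a)$ (cf.\ Remark \ref{rem:two-valley}) and the arithmetic of the weights $1,a,r-a$ modulo $r$; your text contains no argument for either. Worse, one of the ``standard toric facts'' you invoke is false as stated: a strongly convex $3$-cone that is \emph{combinatorially} a cone over a square need not be the conifold --- the cone over any lattice parallelogram not lattice-equivalent to the unit square (e.g.\ one of lattice area greater than $2$) has a strictly worse singularity. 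Being a cone over a quadrilateral buys nothing until you prove that the four primitive generators satisfy $v_1+v_3=v_2+v_4$ with both triangular halves unimodular, which is again exactly the content of the lemma. Finally, the identification $S(\Gamma)=\sigma(\Gamma)^{\vee}\cap M$ that you dispatch with ``one checks'' is the assertion that $S(\Gamma)$ is saturated; a priori $\mathbb{C}[S(\Gamma)]$ could be non-normal (Theorem \ref{Nakamura} only identifies the \emph{normalization} of $\GHilb{n}$ with the toric variety), and the lemma is a statement about $S(\Gamma)$ itself, not about its saturation. Kedzierski's route avoids this entirely by computing $S(\Gamma)$ directly from the valleys --- which is why the direct semigroup computation, rather than the dual ray-counting argument you propose, is the natural way to close these gaps.
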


We say that a $G$-set $\Gamma$ is {\it spanned} by monomials 
$u_1,\dots,u_n$ if $\Gamma$ consists of all monomials dividing 
$u_1,\dots,u_n$, and we write 
$\Gamma = {\rm Span}(u_1,\dots,u_n)$.
Here, $\mathrm{Span}(u_1,\dots,u_n)$ does not denote the linear span 
as a $\mathbb{C}$-vector space, but the divisor-closed set of monomials 
generated by $u_1,\dots,u_n$.

\begin{rem}\label{rem:two-valley}\upshape
  Assume that $\Gamma$ has two valleys, given by the monomials $x^{k_x}y^{k_y}$ and $x^{j_x}z^{j_z}$ (Figure \ref{fig:twovalleys}). Let $i_x, j_y, k_z$ be integers such that $x^{i_x}, y^{j_y} , z^{k_z} \notin \Gamma$ and $x^{i_x-1}, y^{j_y-1} , z^{k_z-1} \in \Gamma$. Then we have
  \[
  \Gamma={\rm Span}\left(x^{i_x-1}y^{k_y}, x^{k_x}y^{j_y-1}, x^{i_x-1}z^{j_z}, x^{j_x}z^{k_z-1}\right),
  \]
 where ${\rm wt}(z^{k_z-1})={\rm wt}(x^{k_x+1}y^{k_y+1})$, ${\rm wt}(y^{j_y-1})={\rm wt}(x^{j_x+1}z^{j_z+1})$ and $i_x=k_x+j_x+2$ (see \cite{Kedzierski}). In Figure \ref{fig:twovalleys}, the gray boxes represent the monomials spanning $\Gamma$, 
and monomials marked with the same symbol (such as a circle or a diamond) correspond to the same character.

\end{rem}

\begin{figure}
\centering
\begin{tikzpicture}
%%点：G-graphの外枠用
\coordinate (A) at (-3,0) node at (A) [below]{};
\coordinate (B) at (9,0) node at (B) [below]{};
\coordinate (C) at (6,5) node at (C) [above]{};
\coordinate (D) at (-1,5) node at (D) [above]{};
\coordinate  (E) at (-1,3);
\coordinate  (F) at (6,2);
\coordinate  (G) at (-3,3);
\coordinate  (H) at (9,2);

%%外枠の辺
\draw (A)--(G)--(E)--(D)--(C)--(F)--(H)--(B)--(A);

%S(Γ）の生成元用の座標
\coordinate  (xi) at (2.5,5) node at (xi) [above]{$x^{i_x}$};
\coordinate  (yj) at (9.5,0) node at (yj) [above]{$y^{j_y}$};
\coordinate  (zk) at (-3.5,0) node at (zk) [above]{$z^{k_z}$};
\coordinate  (O) at (2.4,0)  node at (O) [above]{$1$};
\coordinate  (x) at (2.4,0.7)  node at (x) [above]{$x$};
\coordinate  (y) at (3.2,0)  node at (y) [above]{$y$};
\coordinate  (z) at (1.6,0)  node at (z) [above]{$z$};

%内部のdot線用
\coordinate (1) at (2,5);
\coordinate (2) at (2,0);
\coordinate (3) at (2.8,5);
\coordinate (4) at (2.8,0);
\coordinate (5) at (2,0.6);
\coordinate (6) at (2.8,0.6);
\coordinate (7) at (2,1.2);
\coordinate (8) at (2.8,1.2);

\coordinate (9) at (-3,0.6);
\coordinate (10) at (9,0.6);
\coordinate (11) at (1.2,0);
\coordinate (12) at (3.6,0);
\coordinate (13) at (1.2,0.6);
\coordinate (14) at (3.6,0.6);

%内部の辺
\draw[line width=1pt, dotted]  (1)--(2);
\draw[line width=1pt, dotted]  (3)--(4);
%\draw[line width=1pt, dotted]  (5)--(6);
\draw[line width=1pt, dotted]  (7)--(8);
\draw[line width=1pt, dotted]  (9)--(10);
\draw[line width=1pt, dotted]  (13)--(11);
\draw[line width=1pt, dotted]  (12)--(14);

%z-valley
%\coordinate  (E) at (0,3);
\coordinate (a) at (-1,2.4);
\coordinate (b) at (0.1,2.4);
\coordinate (c) at (0.1,3);
\coordinate (zv) at (-0.45,2.4) node at (zv)[above]{$x^{j_x}z^{j_z}$};

\draw[line width=1pt, dotted]  (E)--(a)--(b)--(c)--(E);

%y-valley
%\coordinate  (F) at (6,2);
\coordinate (d) at (4.8,2);
\coordinate (e) at (4.8,1.4);
\coordinate (f) at (6,1.4);
\coordinate (yv) at (5.45,1.4) node at (yv)[above]{$x^{k_x}y^{k_y}$};

\draw[line width=1pt, dotted]  (F)--(d)--(e)--(f)--(F);

%spanの生成限
\coordinate (15) at (-3,2.4);
\coordinate (16) at (-2.4,2.4);
\coordinate (17) at (-2.4,3);

\draw[line width=1pt]  (G)--(15)--(16)--(17)--(G);
 \fill[lightgray]   (G)--(15)--(16)--(17)--(G);

\coordinate (18) at (-1,4.4);
\coordinate (19) at (-0.4,4.4);
\coordinate (20) at (-0.4,5);

\draw[line width=1pt]  (D)--(18)--(19)--(20)--(D);
 \fill[lightgray]   (D)--(18)--(19)--(20)--(D);

 \coordinate (21) at (6,4.4);
\coordinate (22) at (5.4,4.4);
\coordinate (23) at (5.4,5);

\draw[line width=1pt]  (C)--(21)--(22)--(23)--(C);
 \fill[lightgray]   (C)--(21)--(22)--(23)--(C);

 \coordinate (24) at (9,1.4);
\coordinate (25) at (8.4,1.4);
\coordinate (26) at (8.4,2);

\draw[line width=1pt]  (H)--(24)--(25)--(26)--(H);
 \fill[lightgray] (H)--(24)--(25)--(26)--(H);

%\fill (A) circle (2pt) (B) circle (2pt)  (C) circle (2pt)  (D) circle (2pt)  (E) circle (2pt)  (F) circle (2pt) (G) circle (2pt) (H) circle (2pt)    ; %黒丸をつける（1つの\fillで複数の点を対象にできる）

%対応する指標
\coordinate (27) at (-2.75,0.05) node at (27)[above]{$\diamond$};
\coordinate (28) at (6.3,2.05) node at (28)[above]{$\diamond$};

\coordinate (29) at (-1.25,3.05) node at (29)[above]{$\circ$};
\coordinate (30) at (8.7,0.05) node at (30)[above]{$\circ$};

      \end{tikzpicture}
\caption{Two valleys and socle}
\label{fig:twovalleys}
\end{figure}

For a toric terminal threefold $\compl{3}/G$, the slice of the toric fan of $\GHilb{3}$ at the height $r=|G|$ is subdivided into certain fundamental domains. For example, the following figure (Figure \ref{fig1}) is the slice of $\GHilb{3}$ for the quotient singularity of type $\frac{1}{10}(1,3,7)$. The triangles enclosed by a thick lines are the fundamental domains denoted by $\widetilde{\Theta}(\Gamma_i)$, and these are given by primitive $G$-sets $\Gamma_i$ and triangles of transformations of $\Gamma_i$.

\begin{figure}
  \begin{center}
  
\begin{tikzpicture}
%%三角形分割

\draw (0,0) -- (0:10)   ;

\coordinate (A) at (0,0) node at (A) [below]{$10\bm{e}_2$};%三角形の頂点
\coordinate (B) at (0:10) node at (B) [below]{$10\bm{e}_3$};
\coordinate (C) at (0,22) node at (C) [left] {$10\bm{e}_1$};
\coordinate (D) at (10,22) node at (D) [right] {$10\bm{e}_1$};
\coordinate (01) at (7,22) ;
\coordinate (02) at (4,22) ;
\coordinate (03) at (1,22) ;
\coordinate (04) at (8,22);
\coordinate (05) at (5,22) ;
\coordinate (06) at (2,22);
\coordinate (07) at (9,22) ;
\coordinate (08) at (6,22);
\coordinate (09) at (3,22);

\coordinate (1) at (7,1) node at (1)[below]{$v_1$};%三角形の内点
\coordinate (2) at (4,2) node at (2)[below]{$v_2$};
\coordinate (3) at (1,3) node at (3)[left]{$v_3$};
\coordinate (4) at (8,4) node at (4)[left]{$v_4$};
\coordinate (5) at (5,5) node at (5)[left]{$v_5$};
\coordinate (6) at (2,6) node at (6)[left]{$v_6$};
\coordinate (7) at (9,7) node at (7)[right]{$v_7$};
\coordinate (8) at (6,8) node at (8)[left]{$v_8$};
\coordinate (9) at (3,9) node at (9)[left]{$v_9$};
\coordinate (11) at (7,11) node at (11)[left]{$u_1$};
\coordinate (12) at (4,12) node at (12)[left]{$u_2$};
\coordinate (14) at (8,14) node at (14)[right]{$u_4$};
\coordinate (15) at (5,15) node at (15)[left]{$u_5$};
\coordinate (18) at (6,18) node at (18)[left]{$u_8$};
\coordinate (21) at (7,21) node at (21)[left]{$u^{\prime}_1$};
\draw (A) -- (C);%外枠
\draw (B) -- (D);
\draw (21) -- (01) ;%上辺との線分
\draw (12) -- (02);
\draw (3) -- (03);
\draw (14) -- (04);
\draw (15) -- (05);
\draw (6) -- (06);
\draw (7) -- (07);
\draw (18) -- (08);
\draw (9) -- (09);

\draw (1) -- (A) node[pos=0.5] [fill=white, sloped]{$x^7:z$};%線分
\draw (2) -- (A) node[pos=0.5] [fill=white, sloped]{$x^4:z^2$};
\draw (3) -- (A) node[pos=0.5] [fill=white, sloped]{$x:z^3$};
\draw[line width=2pt] (21)--(3);
\draw[line width=2pt] (21) -- (B) node[pos=0.8] [fill=white, sloped]{$x:y^7$};
\draw (14) -- (2) node[pos=0.85] [fill=white, sloped]{$xy:z^2$};
\draw (7) -- (1) node[pos=0.75] [fill=white, sloped]{$xy^2:z$};
\draw[line width=2pt] (12) -- (B) node[pos=0.8] [fill=white, sloped]{$x^2:y^4$};
\draw[line width=2pt] (3) -- (B) node[pos=0.8] [fill=white, sloped]{$x^3:y$};
\draw (9) -- (1) node[pos=0.7] [fill=white, sloped]{$x^2z:y^3$};
\draw (6) -- (2) node[pos=0.5] [fill=white, sloped]{$x^2z^2:y^2$};
\draw (15) -- (8) node[pos=0.5] [fill=white, sloped]{$xz^2:y^5$};
\draw (18) -- (4) node[pos=0.3] [fill=white, sloped]{$xz:y^6$};
\fill (A) circle (2pt) (B) circle (2pt)   ; %黒丸をつける（1つの\fillで複数の点を対象にできる）

%%3次元錐の名前
\coordinate (g1) at (7,16.5) node at (g1)[above]{$\sigma(\Gamma_1)$};
\coordinate (g2) at (4.5,8.5) node at (g2)[above]{$\sigma(\Gamma_2)$};
     
%%矢印のための座標および矢印      
%\sigma_1側
\coordinate (g1-1) at (6.7,15.7) ;
\coordinate (g1-2) at (6,14.5) ;
\draw[->] (g1-1) -- (g1-2);
\coordinate (g1U) at (5.8,14.7) node at (g1U) [above]{$T_{UL}$};

\coordinate (g1-3) at (7.2,15) ;
\coordinate (g1-4) at (8,10.2) ;
\draw[->] (g1-3) -- (g1-4) node[pos=0.23] [fill=white]{$T_{UR}$};
%%pos=0.5の数字を変えることでラベルの位置を変更可能
%\sigma_2側
\coordinate (g2-1) at (4,8.5) ;
\coordinate (g2-2) at (3.5,6.5) ;
\draw[->] (g2-1) -- (g2-2) node[pos=0.2] [fill=white]{$T_{UL}$};

\coordinate (g2-3) at (5,7.5) ;
\coordinate (g2-4) at (6,5.5) ;
\draw[->] (g2-3) -- (g2-4) node[pos=0.5] [fill=white]{$T_{UR}$};

      \end{tikzpicture}

  \end{center}

  \caption{$G$-igsaw transformations $T_{UL}$ and $T_{UR}$}       \label{fig1}
\end{figure}}

\begin{defi}[Definition 5.8 in \cite{Kedzierski}]\upshape \label{triangle of transf}
Let $\Theta(\Gamma)$ be a triangle of transformations of a $G$-set $\Gamma_i$. Then, we define a fundamental domain:
$$
\widetilde{\Theta}(\Gamma_i) := \bigcup_{\Gamma\in \Theta(\Gamma_i)} \sigma(\Gamma),
$$
where $\sigma(\Gamma)$ is the cone corresponding to a $G$-set $\Gamma$. 
\end{defi}

As can be understood from the above definition, each fundamental domain $\widetilde{\Theta}(\Gamma_i)$ is given by the triangles of transformations of a $G$-set $\Gamma_i$, and the $G$-set which generates the fundamental domain by special $G$-igsaw transformations named $T_{UL}$ and $T_{UR}$ is said to be {\it primitive} in \cite{Kedzierski}. Clearly these $G$-igsaw transformations $T_{UL}$ and $T_{UR}$ induce the transformation of the corresponding cones from $\sigma(\Gamma_i)$ to $\sigma(T_{UL}(\Gamma_i))$ (or $\sigma(T_{UR}(\Gamma_i))$) as in Figure \ref{fig1}. By the way, a primitive $G$-set is the one which has two valleys and these valleys form the power of $x$ (see Definition 5.2 in \cite{Kedzierski}).

If the slice of $\widetilde{\Theta}(\Gamma_i)$ at the height $r$ contains parallelograms (i.e. affine chart which has a toric conifold), then the corresponding ``primitive'' $G$-set always has two valleys, and $T_{UL},\ T_{UR}$ only act to reduce the valleys by the definition of them (for the details, see Section 4 in \cite{Kedzierski}).

We should pay attention that the method given in ``How to calculate $\AHilb$ \cite{CrawReid}'' does not run well in the case of $1/r(1,a,r-a)$ because it is not Gorenstein and settling a {\it cyclic continued fraction} to $[[1,1,1]]$ does not work. See the following example.

\begin{defi}\upshape \label{def:long-side}
For $1\le i<j\le 3$, consider the line $\mathbb R(e_i-e_j)\subset N_\mathbb R$.
Since $e_i-e_j\in N$, the intersection $N\cap \mathbb R(e_i-e_j)$ is a rank-one lattice, hence there exists a
unique integer $c_{ij}\ge 1$ such that
\[
N\cap \mathbb R(e_i-e_j)=\mathbb Z\cdot \frac1{c_{ij}}(e_i-e_j).
\]
We call the edge $e_ie_j$ a \emph{long side} if $c_{ij}>1$.
(In the Gorenstein case $G\subset SL(3,\C)$, this corresponds to the notion in \cite[\S 2.4]{CrawReid}.)
\end{defi}

\begin{ex}\upshape
Let us see in the case of $\frac{1}{5}(1,2,3)$. In this case, there are no long sides. The following calculations are the checking whether the edge is long side or not. For the details, see Section 2.4 in \cite{CrawReid}.
$$
\text{On $\bm{e}_1\bm{e}_2$\ :}\hspace{10pt} \frac{1}{5}(2,-1,1) - \frac{1}{5}(-3,4,1) = \bm{e}_1 -\bm{e}_2,
$$
$$
\text{on $\bm{e}_2\bm{e}_3$\ :}\hspace{10pt} \frac{1}{5}(1,2,-2) - \frac{1}{5}(1,-3,3) = \bm{e}_2 -\bm{e}_3,
$$
$$
\text{on $\bm{e}_3\bm{e}_1$\ :}\hspace{10pt} \frac{1}{5}(-2,1,4) - \frac{1}{5}(3,1,-1) = \bm{e}_3 -\bm{e}_1.
$$
Therefore, all edges $\bm{e}_1\bm{e}_2, \bm{e}_2\bm{e}_3, \bm{e}_3\bm{e}_1$ are not long side (i.e., $c=1$), and the cyclic continued fraction is as follows:
$$
[[1,2,2,2,2,1,3,2,1,3,2]],
$$
and there is no way to down this cyclic continued fraction to $[[1,1,1]]$. For example, let us start to contraction on the edge $\bm{e}_1\bm{e}_2$. The cyclic continued fraction turn to be as follows.
$$
[[1,2,2,2,\underline{2,1,3},2,1,3,2]] \rightarrow [[1,2,2,\underline{2,1,2},2,1,3,2]] \rightarrow [[1,2,\underline{2,1,1},2,1,3,2]] \rightarrow [[1,\underline{2,1,0},2,1,3,2]].
$$
A zero appears in the cyclic continued fraction, and this means that the calculation can not be continued. This is different from the Gorenstein case.
\end{ex}

However, focusing on the second and third components of $\frac{1}{r}(1,a,r-a)$, we can apply an analogy of the construction method given by \cite{CrawReid} only on the edge $\bm{e}_2\bm{e}_3$ because this can be identified with the case of two dimensional Gorenstein quotient singularities.

\begin{lem}\label{no long side}
In the case of $\frac{1}{r}(1,a,r-a)$, all edges $\bm{e}_1\bm{e}_2, \bm{e}_2\bm{e}_3, \bm{e}_3\bm{e}_1$ cannot be long side.
\end{lem}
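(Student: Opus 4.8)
The plan is to unwind the definition of \emph{long side} from Section 2.4 of \cite{CrawReid} and reduce the statement to an elementary coprimality check. For an edge $\bm{e}_i\bm{e}_j$ with opposite vertex $\bm{e}_k$, the side carries a parameter $c$, and it is a long side precisely when $c\ge 2$. Following the example preceding the lemma, $c$ is detected as follows: one looks for a pair of lattice points of $N$ adjacent to the two endpoints of the edge and lying in the lowest layer parallel to it, that is, with $k$-th coordinate as small as possible; the edge satisfies $c=1$ exactly when this minimal $k$-th coordinate equals $1/r$ and the two points differ by the primitive edge vector $\bm{e}_i-\bm{e}_j$. Equivalently, $c$ is the lattice length of the side, i.e. the number of points of $N$ on the half-open segment $[\bm{e}_i,\bm{e}_j)$ at height $1$, and this equals $\gcd(a_k,r)$ where $a_k$ is the weight of the transverse coordinate.

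First I would record the key arithmetic fact. Writing the weights as $(a_1,a_2,a_3)=(1,a,r-a)$, the transverse weight of the edge $\bm{e}_2\bm{e}_3$ is $a_1=1$, that of $\bm{e}_3\bm{e}_1$ is $a_2=a$, and that of $\bm{e}_1\bm{e}_2$ is $a_3=r-a$. Since $\gcd(r,a)=1$ by hypothesis and $\gcd(r,r-a)=\gcd(r,a)=1$, all three transverse weights are units modulo $r$. Hence for each edge there is a $t\in\{1,\dots,r\}$ with $t\,a_k\equiv 1\pmod r$; for $\bm{e}_2\bm{e}_3$ one may take $t=1$, while for $\bm{e}_3\bm{e}_1$ and $\bm{e}_1\bm{e}_2$ one takes $t\equiv a^{-1}$ and $t\equiv (r-a)^{-1}$ modulo $r$.

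Next, for each edge I would exhibit the two lattice points explicitly, generalising the displayed example. Starting from $t g=\tfrac{t}{r}(1,a,r-a)$, whose $k$-th coordinate is $\tfrac{t a_k}{r}\equiv\tfrac1r\pmod 1$, I reduce modulo $\mathbb{Z}^3$ to two representatives $P,Q\in N$ adjacent to the two endpoints of the edge, both having $k$-th coordinate equal to $1/r$. Since $P$ and $Q$ lie in the same layer and differ only in the $\bm{e}_i,\bm{e}_j$ directions, their difference is forced to be $\pm(\bm{e}_i-\bm{e}_j)$; concretely, on $\bm{e}_2\bm{e}_3$ this reproduces $\tfrac1r(1,a,-a)-\tfrac1r(1,a-r,r-a)=\bm{e}_2-\bm{e}_3$ (namely $(g-\bm{e}_3)-(g-\bm{e}_2)$), and the two analogous relations hold on $\bm{e}_3\bm{e}_1$ and $\bm{e}_1\bm{e}_2$ after replacing $g$ by $t g$. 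Thus the minimal layer sits at height $1/r$, so $c=1$ on every edge and none of the three is a long side.

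The only point requiring genuine care is the identification of the long-side parameter with invertibility of the transverse weight: one must verify that a point of $N$ with $k$-th coordinate exactly $1/r$ exists if and only if $a_k$ is a unit mod $r$, equivalently $\gcd(a_k,r)=1$, and that $1/r$ is then the minimal positive value attained in that direction. This is precisely where the hypothesis $\gcd(r,a)=1$ enters, simultaneously covering the three weights $1$, $a$ and $r-a$. Everything else is the bookkeeping of reducing $t g$ to the correct representatives, which is routine and already illustrated by the example; I do not expect the edges $\bm{e}_3\bm{e}_1$ and $\bm{e}_1\bm{e}_2$ to differ from $\bm{e}_2\bm{e}_3$ beyond the passage from $g$ to its multiple $t g$.
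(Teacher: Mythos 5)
Your proposal is correct and follows essentially the same route as the paper: the paper's proof also exhibits, for each edge, the lattice point at transverse height $1/r$ obtained from the multiples $g$, $\alpha g$, $\beta g$ of the generator (with $\alpha a\equiv 1$ and $\beta(r-a)\equiv 1 \pmod r$, exactly your $t\equiv a_k^{-1}$), and then verifies that the two translates adjacent to the endpoints differ by the primitive edge vector $\bm{e}_i-\bm{e}_j$, so $c=1$ on all three sides. Your additional remarks identifying $c$ with $\gcd(a_k,r)$ only make explicit the role of the coprimality hypothesis that the paper uses implicitly.
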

\begin{proof}
Since we assume that $\GCD(r,a)=1$, there exist the lattice points:
$$
\bm{p}_1=\frac{1}{r}(1,a,r-a),\ \bm{p}_2=\frac{1}{r}([\alpha]_r,1,[\alpha(-a)]_r),\ \bm{p}_3=\frac{1}{r}([\beta]_r,[\beta a]_r,1)\in N
$$
where the symbol $[k]_r$ is the smallest positive integer satisfying $k \equiv [k]_r \ (\mod \ r)$ for any integer $k$, and the positive integers $\alpha, \beta$ satisfy $\alpha a \equiv 1 \ (\mod \ r)$ and $\beta (r-a) \equiv 1 \ (\mod \ r)$.
These are the nearest points from the edges $\bm{e}_2\bm{e}_3, \bm{e}_3\bm{e}_1, \bm{e}_1\bm{e}_2$ in the slice of $\Delta$ respectively, and these must be used in the subdivision to construct $\GHilb{3}$.
$$
\text{On $\bm{e}_1\bm{e}_2$\ :}\hspace{10pt} (\bm{p}_3- \bm{e}_2) - (\bm{p}_3- \bm{e}_1) = \bm{e}_1 -\bm{e}_2,
$$
$$
\text{on $\bm{e}_2\bm{e}_3$\ :}\hspace{10pt} (\bm{p}_1- \bm{e}_3) - (\bm{p}_1- \bm{e}_2) = \bm{e}_2 -\bm{e}_3,
$$
$$
\text{on $\bm{e}_3\bm{e}_1$\ :}\hspace{10pt} (\bm{p}_2- \bm{e}_1) - (\bm{p}_2- \bm{e}_3) = \bm{e}_3 -\bm{e}_1.
$$
Therefore, all of these edges are not long side.
\end{proof}

Speaking specifically about the construction, we can use an analogy of the method in \cite{CrawReid} for only on the edge $\bm{e}_2\bm{e}_3$ by using the {\it lower subsequence} of a cyclic continued fraction as follows.

We assume that
$$
\text{At $\bm{e}_2$\ :}\hspace{10pt} \frac{r}{[\alpha(-a)]_r}=[[b_1, b_2, \ldots , b_k]],
$$
$$
\text{at $\bm{e}_3$\ :}\hspace{10pt} \frac{r}{a}=[[c_1, c_2, \ldots , c_l]].
$$

We define the {\it lower subsequence} of a cyclic continued fraction as follows:
$$
[[b_1, b_2, \ldots , b_l, 1, c_1, c_2, \ldots , c_m]]
$$
for a cyclic continued fraction 
$$
[[1, a_1, a_2, \ldots , a_k, 1, b_1, b_2, \ldots , b_l, 1, c_1, c_2, \ldots , c_m]].
$$
The $1$s in the first component, between $a_k$ and $b_1$ and between $b_l$ and $c_1$ comes from Lemma \ref{no long side}. By using this, we can subdivide the slice of $\Delta$ due to the similar way given in ``How to calculate $\AHilb$ \cite{CrawReid}''. $\GHilb{3}$ in the case of $\frac{1}{r}(1,a,r-a)$ can be given by the following construction.

\begin{const}\upshape \label{const}
We can obtain $\GHilb{3}$ in the case of $\frac{1}{r}(1,a,r-a)$ through the following steps as Figure \ref{fig2}.

\noindent
{\bf Step\ 1}:\ By using the lower subsequence of the cyclic continued fraction and following the rules of ``2.8.1 It's a knock-out!'' in \cite{CrawReid}, we make triangles starting from the bottom edge spanned by $r\bm{e}_2$ and $r\bm{e}_3$. We call the outermost edge drawn here the {\it frame}. Note that at this point, the subdivision inside the frame is not fully complete, and the ``holes'' corresponding to $\widetilde{\Theta}(\Gamma_i)$ and being not smooth cone remains.

\noindent
{\bf Step\ 2}:\ Fill the ``holes'' by Definition \ref{triangle of transf} (i.e. Kedzierski's method). Generally, the holes are filled with the parallelograms of which the edges are parallel to the edges connecting the top of the hole and either $r\bm{e}_2$ or $r\bm{e}_3$.

\noindent
{\bf Step\ 3}:\ Finally, from $\bm{e}_1$, draw line segments to the lattice points on the outer edges drawn in ``Step 1'' radially in the number of coefficients of the continued fraction $\frac{r}{[\alpha(-a)]_r}$ to subdivide. We call the lines drawn here the {\it cable}.
 
\end{const}

\begin{figure}[h]
  \centering
  \begin{tabular}{cc}
        \begin{minipage}[t]{0.47\hsize}
\begin{tikzpicture}
%%点：極座標表示してます（角度:原点からの距離）
\coordinate (A) at (0,0) node at (A) [below]{$r\bm{e}_2$};
\coordinate (B) at (0:6) node at (B) [below]{$r\bm{e}_3$};
\coordinate (C) at (60:6) node at (C) [above]{$r\bm{e}_1$};
\coordinate (D) at (40:4.5) node at (D) [above,red ]{Frame};
\coordinate (E) at ($(A)!.5!(D)$);
%knockout line
\coordinate (F) at ($(E)!.666666!(B)$);
\coordinate (G) at ($(E)!.333333!(B)$);

%%辺
%\draw (A)--(B);
\draw (B)--(C);
\draw (C)-- (A);
%frame
\draw[red] (A)--(D);
\draw[red] (B)--(D);
\draw[red] (A)--(B);
%内部の辺
\draw (B)--(E);
\draw (A)--(F);
\draw (A)--(G);

\fill (C) circle (2pt) (A) circle (2pt) (B) circle (2pt)   ; %黒丸をつける（1つの\fillで複数の点を対象にできる）

%hole
\filldraw [opacity=0.2, fill=gray] (B) -- (D) -- (E) -- cycle;
\coordinate(hole) at (25:4) node at (hole) {Hole} ; %holeの名前用の座標
      \end{tikzpicture}
      \subcaption[simple]{Step 1: Knock-out}
\end{minipage}

&
            \begin{minipage}[t]{0.47\hsize}
    \begin{tikzpicture}
%%%点
\coordinate (A) at (0,0) node at (A) [below]{$r\bm{e}_2$};
\coordinate (B) at (0:6) node at (B) [below]{$r\bm{e}_3$};
\coordinate (C) at (60:6) node at (C) [above]{$r\bm{e}_1$};
\coordinate (D) at (40:4.5);
\coordinate (E) at ($(A)!.5!(D)$);
%hole の分割のための座標
\coordinate (F) at ($(E)!.666666!(B)$);
\coordinate (G) at ($(E)!.333333!(B)$);
\coordinate (H) at ($(D)!.666666!(B)$);
\coordinate (I) at ($(D)!.333333!(B)$);
\coordinate (J) at ($(D)!.666666!(E)$);
\coordinate (K) at ($(D)!.333333!(E)$);

%%辺
\draw (A)--(B);
\draw (B)--(C);
\draw (C)-- (A);
%frame
\draw (A)--(D);
\draw (B)--(D);
%内部の辺
\draw (B)--(E);
\draw (A)--(F);
\draw (A)--(G);

%holeの分割
\draw[red] (H)--(F);
\draw[red] (I)--(G);
\draw[red] (G)--(J);
\draw[red] (F)--(K);

\fill (C) circle (2pt) (A) circle (2pt) (B) circle (2pt)   ; %黒丸をつける（1つの\fillで複数の点を対象にできる）

\coordinate(hole) at (26:4) node at (hole) [above]{{\small$\widetilde{\Theta}(\Gamma_i)$}} ; %基準の並行四辺形の名前用の座標

%3次元錐の名前
      \end{tikzpicture}
      \subcaption{Step 2: Subdivision of holes}
\end{minipage}

\\
 \multicolumn{2}{c}{
        \begin{minipage}[t]{0.53\hsize}
    \begin{tikzpicture}
%%%点
\coordinate (A) at (0,0) node at (A) [below]{$r\bm{e}_2$};
\coordinate (B) at (0:6) node at (B) [below]{$r\bm{e}_3$};
\coordinate (C) at (60:6) node at (C) [above]{$r\bm{e}_1$};
\coordinate (D) at (40:4.5);
\coordinate (E) at ($(A)!.5!(D)$);
%hole の分割のための座標
\coordinate (F) at ($(E)!.666666!(B)$);
\coordinate (G) at ($(E)!.333333!(B)$);
\coordinate (H) at ($(D)!.666666!(B)$);
\coordinate (I) at ($(D)!.333333!(B)$);
\coordinate (J) at ($(D)!.666666!(E)$);
\coordinate (K) at ($(D)!.333333!(E)$);

%%辺
\draw (A)--(B);
\draw (B)--(C);
\draw (C)-- (A);
%frame
\draw (A)--(D);
\draw (B)--(D);
%内部の辺
\draw (B)--(E);
\draw (A)--(F);
\draw (A)--(G);

%holeの分割
\draw (H)--(F);
\draw (I)--(G);
\draw (G)--(J);
\draw (F)--(K);

%cable
\draw[red] (C)--(D);
\draw[red] (C)--(E);
\draw[red] (C)--(H);
\draw[red] (C)--(I);
\draw[red] (C)--(J);
\draw[red] (C)--(K);

\coordinate(hole) at (40:6) node at (hole) [above, red]{{Cables}} ; %基準の並行四辺形の名前用の座標

\fill (C) circle (2pt) (A) circle (2pt) (B) circle (2pt)   ; %黒丸をつける（1つの\fillで複数の点を対象にできる）

%3次元錐の名前
      \end{tikzpicture}
    \subcaption{Step 3: Cables}
      \end{minipage}
}
      \end{tabular}
  \caption{Construction of $\GHilb{3}$}
  \label{fig2}
\end{figure}

%\includegraphics[width=50mm]{construction.pdf}

%\begin{figure}[htbp]
%\includepraphic[width=100mm]{construction.pdf}
%\end{figure}

%By the above construction, we have the followings.

\begin{note}\upshape
By the above construction and Kedzierski's method \cite{Kedzierski}, we note that every hole has regular triangles along the bottom edge, moreover, if all the parallelograms in a hole are subdivided in a direction parallel to the bottom edge, then they become regular triangles.
\end{note}

\begin{lem}\label{degree}
Let $r$ be the order of $G$, and let $S \subset N_{\real{}}$ be the section of the cones corresponding to $\GHilb{3}$ spanned by $r\bm{e}_1,\ r\bm{e}_2$ and $r\bm{e}_3$. Then, the degree of the vertex in $S$ excluding $r\bm{e}_1,\ r\bm{e}_2,\ r\bm{e}_3$ is 3 or more, but less than or equal to 5.
\end{lem}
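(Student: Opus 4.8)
The plan is to read the valency bounds directly off Construction~\ref{const}, regarding $S$ as a planar subdivision of the triangle $T$ with vertices $r\bm{e}_1,r\bm{e}_2,r\bm{e}_3$ whose two-cells are the triangles produced in Steps~1 and~3 together with the parallelograms that fill the holes $\widetilde{\Theta}(\Gamma_i)$ in Step~2 (each parallelogram being cut along its diagonal in the small resolution $\widetilde{\GHilb{3}}$). Writing $\deg(v)$ for the number of one-cells of $S$ meeting a vertex $v$, I must show $3\le\deg(v)\le 5$ for every $v$ distinct from the three corners.

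The lower bound is purely combinatorial. Any such $v$ is either interior to $T$ or lies in the relative interior of one of the edges $\bm{e}_i\bm{e}_j$. An interior vertex is encircled by at least three two-cells and so meets at least three edges, while a non-corner boundary vertex carries the two collinear edges running along $\partial T$ together with at least one edge pointing into the interior; in both cases $\deg(v)\ge 3$.

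The real content is the upper bound, where I would split the edges at $v$ according to the step of Construction~\ref{const} that produces them: the frame edges of the knock-out (Hirzebruch--Jung) subdivision along $\bm{e}_2\bm{e}_3$; the parallelogram edges together with the single diagonal added per parallelogram in $\widetilde{\GHilb{3}}$; and the cables, the radial segments through the apex $r\bm{e}_1$ drawn in Step~3. Three structural facts then do the work: a non-apex vertex lies on at most one cable and so receives at most two collinear cable edges; by Lemma~\ref{no long side} the edge $\bm{e}_2\bm{e}_3$ carries no long side, so the knock-out subdivision is the minimal two-dimensional toric resolution there and its vertices have controlled frame valency; and, by the Note preceding this lemma, the parallelograms are subdivided in a single direction parallel to $\bm{e}_2\bm{e}_3$, so their diagonals form one parallel family and contribute at most one edge at any vertex. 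I would then enumerate the local pictures---bottom-edge vertices, side-edge vertices, the frame vertices forming the tops and corners of the holes, the parallelogram (hole-interior) vertices, and the interior cable vertices---using Kedzierski's description of a two-valley $G$-set and of the moves $T_{UL},T_{UR}$ (Remark~\ref{rem:two-valley}) to fix each configuration. In every case the (at most two) frame edges, the (at most two) cable edges and the (at most one) diagonal total at most five, the value five being attained exactly at the apex of a hole.

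The main obstacle is this upper bound at the vertices where all three families accumulate---the top of a hole and the $y$- and $z$-valley (socle) vertices of Remark~\ref{rem:two-valley}, where a cable, two frame edges and a parallelogram diagonal all meet. There one must rule out a sixth edge, i.e.\ show that the triangular array of parallelograms never degenerates into a full hexagonal degree-six vertex. I expect this to reduce to the parallelism recorded in the Note together with the fact that $T_{UL}$ and $T_{UR}$ only reduce valleys, so that each hole is tiled by parallelograms whose edges use only two of the three lattice directions; the diagonal supplies the third direction, but only once at each vertex, which is what caps the valency at five.
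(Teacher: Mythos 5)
Your proposal contains a genuine error, and it is not a repairable detail: you prove a statement about the wrong object. Lemma~\ref{degree} concerns the fan of $\GHilb{3}$ itself, in which each hole $\widetilde{\Theta}(\Gamma_i)$ is filled by \emph{parallelogram} cones (the conifold points); the diagonal subdivision belongs to the small resolution $\widetilde{\GHilb{3}}$, which the paper introduces only \emph{after} this lemma. By counting ``the single diagonal added per parallelogram in $\widetilde{\GHilb{3}}$'' among the edges at $v$, you change the statement --- and in fact make it false. The step where this surfaces is your claim that the diagonals ``form one parallel family and contribute at most one edge at any vertex'': precisely because they form a parallel family, the diagonals of adjacent parallelograms concatenate into straight lines that pass \emph{through} the interior vertices of a hole, contributing two edges there, not one. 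Concretely, for $G=\frac{1}{10}(1,3,7)$ the vertex $v_5$ lies in the four cones $\sigma_5,\sigma_8,\sigma_9,\sigma_{12}$ of Table~\ref{tb:137-10Ggraph} and has degree $4$ in ${\rm Fan}(G)$ (edges to $v_1,v_2,v_8,v_9$), but in $\widetilde{\GHilb{3}}$ the dotted diagonal from $v_6$ to $v_4$ in Figure~\ref{fig3} passes through $v_5$, raising its degree to $6$; the same happens at $u_1$ via the diagonal from $v_7$ to $u_5$. So no enumeration of local pictures can close your argument: the bound $\deg(v)\le 5$ simply fails in the complex where you are working.

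Two further slips confirm the misreading. First, you assert that five ``is attained exactly at the apex of a hole,'' but the top of the frame ends with degree $3$ (two frame edges plus one cable --- e.g.\ $u_1'$ in Table~\ref{tb:137-10sc-ec}); degree $5$ occurs instead at the vertices on the \emph{bottom} edges of the holes, where the Step~2 parallelograms meet the Step~1 triangles (the Case~4 vertices $v_1,v_2,v_4,v_8$). Second, your lower-bound discussion of non-corner vertices on $\partial T$ is vacuous: since $\gcd(r,a)=1$ the singularity is isolated, so there are no lattice points in the relative interiors of the edges of the triangle, and this is exactly how the paper excludes degree $2$. The paper's actual proof stays in ${\rm Fan}(G)$ and tracks degrees through Construction~\ref{const}: Step~1 gives degree $2$ or $3$; Step~2 gives degree $4$ at hole-interior vertices and $5$ at hole-bottom vertices; Step~3 adds one cable edge to each frame vertex, yielding $3$ at the top of the frame and $4$ at frame--cable crossings --- whence the bound $3\le\deg(v)\le 5$, with no diagonals anywhere in sight.
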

\begin{proof}
Clearly, degree is equal to or greater than two. By the assumption $\GCD(r,a)=1$, the singularity is always isolated, and there are no lattice point on the edge of the slice excluding $r\bm{e}_1, r\bm{e}_2, r\bm{e}_3$. Therefore, there are no points with degree two.

By Construction \ref{const}, we have the following.

\noindent
1. In the Step 1, the degree of a vertex is at most 3 and at least 2 excluding $r\bm{e}_1, r\bm{e}_2, r\bm{e}_3$. The vertices with degree 2 are on the edge connecting the top of the frame and either $r\bm{e}_2$ or $r\bm{e}_3$. And the degree of other vertices is $3$.\\

\noindent
2. In the Step 2, the degree of a point in the interior of a hole is always $4$ (see Figure \ref{fig3} (c)), the degree of a point on the bottom edges of a hole is $5$ (see Figure \ref{fig3} (d)), and the degree of other points is $3$.\\

\noindent
3. In the Step 3, the degree of the points on the edge connecting the top of the frame and either $r\bm{e}_2$ or $r\bm{e}_3$ increase by $1$. Therefore, the degree of them is

\noindent
(i). three, if the point is the top of the frame (see Figure \ref{fig3} (a)),\\
\noindent
(ii). four, if the point is the cross point on the frame with cable (see Figure \ref{fig3} (b)).\\

Therefore, the degree of the vertex in $S$ excluding $r\bm{e}_1,\ r\bm{e}_2,\ r\bm{e}_3$ is 3 or more, but less than or equal to 5.
\end{proof}

\if0

\begin{figure}[htbp]
    \begin{tabular}{cc}
      \begin{minipage}[t]{0.45\hsize}
        \centering
       \begin{tikzpicture}
\coordinate (v) at (0:0) node at (v) [left] {$v$}; %Divisiorに対応する点 
\coordinate (1) at (90:1.5) node at (1) [above]{$\bm{e}_1$};
\coordinate (2) at (-30:2) node at (2) [above]{$\bm{e}_3$};
\coordinate (3) at (210:2) node at (3) [left]{$\bm{e}_2$};
\draw (v) -- (90:1.5) node[pos=0.5] {};
\draw (v) -- (-30:2) node[pos=0.5, sloped] [above]{};
\draw (v) -- (210:2) node[pos=0.5, sloped] [above]{};

%錐の名前
%\coordinate (sigma) at (-90:1) node at (sigma)  {$\sigma_{\Gamma}$};

\fill (v) circle (2pt) (1) circle (1pt) (2) circle (1pt) (3) circle (1pt); %黒丸をつける（1つの\fillで複数の点を対象にできる）
      
      \end{tikzpicture}
        \subcaption{Valency 3}
        \label{valency3}
      \end{minipage} &
      \begin{minipage}[t]{0.45\hsize}
        \centering
        \begin{tikzpicture}
\coordinate (v) at (0:0) node at (v) [left] {$v$}; %Divisiorに対応する点 
\coordinate (1) at (90:1.5) node at (1) [above]{$\bm{e}_1$};
\coordinate (2) at (-30:1.5) node at (2) [above]{};
\coordinate (3) at (210:2) node at (3) [left]{$\bm{e}_2$};
\coordinate (4) at (30:2) node at (4) [above]{};
\draw (v) -- (90:1.5) node[pos=0.5] {};
\draw (v) -- (2) node[pos=0.5] {};
\draw (v) -- (4) node[pos=0.5] {};
\draw (v) -- (210:2) node[pos=0.5, sloped] [above]{};

%錐の名前
%\coordinate (sigma) at (-90:1) node at (sigma)  {$\sigma_{\Gamma}$};

\fill (v) circle (2pt) (1) circle (1pt) (3) circle (1pt); %黒丸をつける（1つの\fillで複数の点を対象にできる）

      \end{tikzpicture}
        \subcaption{Valency 4}
        \label{valency4}
      \end{minipage} \\
   
      \begin{minipage}[t]{0.45\hsize}
        \centering
         \begin{tikzpicture}
\coordinate (v) at (0:0) node at (v) [above] {$v$}; %Divisiorに対応する点 
\coordinate (1) at (150:1) node at (1) [above]{$$};
\coordinate (2) at (-30:2) node at (2) [above]{};
\coordinate (3) at (210:2) node at (3) [left]{};
\coordinate (4) at (30:1) node at (4) [above]{};
\coordinate (A) at (-30:1); %
\coordinate (B) at (210:1);
\coordinate (C) at (-90:1);
\draw (v) -- (1) node[pos=0.5] {};
\draw (v) -- (2) node[pos=1.1] {$L_1$};
\draw (v) -- (4) node[pos=0.5] {};
\draw (v) -- (3) node[pos=1.1] {$L_2$};
\draw ($ (A) !2! (C) $ ) -- ($ (C) !2! (A) $) node[pos=-0.05] {$L_4$}; 
\draw ($ (B) !2! (C) $ ) -- ($ (C) !2! (B) $) node[pos=-0.05] {$L_3$};

%錐の名前
%\coordinate (sigma) at (-90:0.8) node at (sigma)  {$\sigma_{\Gamma}$};

\fill (v) circle (2pt)  (A) circle (1.5pt) (B) circle (1.5pt) (C) circle (1.5pt); %黒丸をつける（1つの\fillで複数の点を対象にできる）

      \end{tikzpicture}
        \subcaption{Two straight line}
        \label{twostraightline}
      \end{minipage} &
      \begin{minipage}[t]{0.45\hsize}
        \centering
     \begin{tikzpicture}
\coordinate (v) at (0:0) node at (v) [left] {$v$}; %Divisiorに対応する点 
\coordinate (1) at (140:1.5) node at (1) [above]{};
\coordinate (2) at (-30:2) node at (2) [left]{$\bm{e}_3$};
\coordinate (3) at (210:1.5) node at (3) [left]{};
\coordinate (4) at (30:2) node at (4) [above]{};
\coordinate (5) at (70:1.5) node at (5) [above]{};
\draw (v) -- (1) node[pos=0.5] {};
\draw (v) -- (2) node[pos=0.5] {} node[pos=0.5, sloped][above] {};
\draw (v) -- (4) node[pos=0.5] {};
\draw (v) -- (5) node[pos=0.5] {};
\draw (v) -- (3);

%錐の名前
%\coordinate (sigma) at (-90:1) node at (sigma)  {$\sigma_{\Gamma}$};

\fill (v) circle (2pt)  (2) circle (1pt); %黒丸をつける（1つの\fillで複数の点を対象にできる）

      \end{tikzpicture}
        \subcaption{Valency 5}
        \label{valency5}
      \end{minipage} 
    \end{tabular}
     \caption{The degree of the vertex}
    　\label{fig3}
  \end{figure}

\fi

%We note that there are two possible forms (b) and (c) in the case of degree four.  

As we saw in the previous section, in this case, $\GHilb{3}$ is not necessarily smooth, and Reid's recipe cannot run well over $\GHilb{3}$. Though, fortunately, $\GHilb{3}$ has only toric conifold even if $\GHilb{3}$ is not smooth by Lemma \ref{classification 1}. Therefore, we can take a small resolution, and that does not change the exceptional divisors in $\GHilb{3}$. In the below, we focus on the small resolution of the toric conifold in $\GHilb{3}$.

As discussed above in Lemma \ref{degree}, if the slice of $\widetilde{\Theta}(\Gamma_i)$ at the height $r$ contains parallelograms, then those parallelograms have the same shape, and the edges of the parallelograms are parallel to the two of three sides of the triangle which is formed by the slice of $\widetilde{\Theta}(\Gamma_i)$. Additionally, those direction coincides with the directions of $T_{UL},\ T_{UR}$. For example, in the case of $\frac{1}{10}(1,3,7)$, the parallelograms are as in Figure \ref{fig3}.

Therefore, it is possible to take the resolution $\widetilde{\GHilb{3}}$ by subdividing the parallelogram into two parts using a diagonal line parallel to the remaining side of the triangle which is formed by the slice of $\widetilde{\Theta}(\Gamma_i)$. In the case of $\frac{1}{10}(1,3,7)$, this resolution is as Figure \ref{fig3}. 

This resolution $\widetilde{\GHilb{3}}$ gives the small resolution over $\GHilb{3}$. This resolution $\widetilde{\GHilb{3}}$ is meaningful because an analogy of ``Reid's recipe'' runs well on $\widetilde{\GHilb{3}}$. The details will be covered in the next section.

\begin{rem}\upshape
The above small resolution $\widetilde{\GHilb{3}}$ subdivides a cone $\sigma(\Gamma)$ for a $G$-set $\Gamma$ with two valleys, the diagonal 
subdivision of the parallelogram gives an exceptional curve. 
If $i_x$ is the exponent determined by $x^{i_x}\notin\Gamma$ and 
$x^{i_x-1}\in\Gamma$, then the monomial ratio associated to this exceptional curve is $x^{i_x}:y^{\bullet}z^{\bullet}$, and the corresponding character of $G$ is ${\rm wt}(x^{i_x})$.
\end{rem}

\begin{figure}
  \begin{center}
  
\begin{tikzpicture}
%%三角形分割

\draw (0,0) -- (0:10)   ;

\coordinate (A) at (0,0) node at (A) [below]{$10\bm{e}_2$};%三角形の頂点
\coordinate (B) at (0:10) node at (B) [below]{$10\bm{e}_3$};
\coordinate (C) at (0,22) node at (C) [left] {$10\bm{e}_1$};
\coordinate (D) at (10,22) node at (D) [right] {$10\bm{e}_1$};
\coordinate (01) at (7,22) ;
\coordinate (02) at (4,22) ;
\coordinate (03) at (1,22) ;
\coordinate (04) at (8,22);
\coordinate (05) at (5,22) ;
\coordinate (06) at (2,22);
\coordinate (07) at (9,22) ;
\coordinate (08) at (6,22);
\coordinate (09) at (3,22);

\coordinate (1) at (7,1) node at (1)[below]{$v_1$};%三角形の内点
\coordinate (2) at (4,2) node at (2)[below]{$v_2$};
\coordinate (3) at (1,3) node at (3)[left]{$v_3$};
\coordinate (4) at (8,4) node at (4)[left]{$v_4$};
\coordinate (5) at (5,5) node at (5)[left]{$v_5$};
\coordinate (6) at (2,6) node at (6)[left]{$v_6$};
\coordinate (7) at (9,7) node at (7)[right]{$v_7$};
\coordinate (8) at (6,8) node at (8)[left]{$v_8$};
\coordinate (9) at (3,9) node at (9)[left]{$v_9$};
\coordinate (11) at (7,11) node at (11)[left]{$u_1$};
\coordinate (12) at (4,12) node at (12)[left]{$u_2$};
\coordinate (14) at (8,14) node at (14)[right]{$u_4$};
\coordinate (15) at (5,15) node at (15)[left]{$u_5$};
\coordinate (18) at (6,18) node at (18)[left]{$u_8$};
\coordinate (21) at (7,21) node at (21)[left]{$u^{\prime}_1$};
\draw (A) -- (C);%外枠
\draw (B) -- (D);
\draw (21) -- (01) ;%上辺との線分
\draw (12) -- (02);
\draw (3) -- (03);
\draw (14) -- (04);
\draw (15) -- (05);
\draw (6) -- (06);
\draw (7) -- (07);
\draw (18) -- (08);
\draw (9) -- (09);

\draw (1) -- (A) node[pos=0.5] [fill=white, sloped]{$x^7:z$};%線分
\draw (2) -- (A) node[pos=0.5] [fill=white, sloped]{$x^4:z^2$};
\draw (3) -- (A) node[pos=0.5] [fill=white, sloped]{$x:z^3$};
\draw[line width=2pt] (21)--(3);
\draw[line width=2pt] (21) -- (B) node[pos=0.8] [fill=white, sloped]{$x:y^7$};
\draw (14) -- (2) node[pos=0.85] [fill=white, sloped]{$xy:z^2$};
\draw (7) -- (1) node[pos=0.75] [fill=white, sloped]{$xy^2:z$};
\draw[line width=2pt] (12) -- (B) node[pos=0.8] [fill=white, sloped]{$x^2:y^4$};
\draw[line width=2pt] (3) -- (B) node[pos=0.8] [fill=white, sloped]{$x^3:y$};
\draw (9) -- (1) node[pos=0.7] [fill=white, sloped]{$x^2z:y^3$};
\draw (6) -- (2) node[pos=0.5] [fill=white, sloped]{$x^2z^2:y^2$};
\draw (15) -- (8) node[pos=0.5] [fill=white, sloped]{$xz^2:y^5$};
\draw (18) -- (4) node[pos=0.3] [fill=white, sloped]{$xz:y^6$};
\fill (A) circle (2pt) (B) circle (2pt)   ; %黒丸をつける（1つの\fillで複数の点を対象にできる）

%small resolutionの例外曲線
\draw[line width=1pt, dotted] (6) -- (4);
\draw[line width=1pt, dotted] (9) -- (8);      
\draw[line width=1pt, dotted] (7) -- (15);      
\draw[line width=1pt, dotted] (14) -- (18);

      \end{tikzpicture}
       
  \end{center}

  \caption{The small resolution $\widetilde{\GHilb{3}}$ over $\GHilb{3}$}
  \label{fig3}
\end{figure}

\begin{figure}[htbp]
    \begin{tabular}{cc}
      \begin{minipage}[t]{0.45\hsize}
        \centering
       \begin{tikzpicture}
\coordinate (v) at (0:0) node at (v) [left] {$v$}; %Divisiorに対応する点 
\coordinate (1) at (90:1.5) node at (1) [above]{$\bm{e}_1$};
\coordinate (2) at (-30:2) node at (2) [above]{$\bm{e}_3$};
\coordinate (3) at (210:2) node at (3) [left]{$\bm{e}_2$};
\draw (v) -- (90:1.5) node[pos=0.5] {};
\draw (v) -- (-30:2) node[pos=0.5, sloped] [above]{};
\draw (v) -- (210:2) node[pos=0.5, sloped] [above]{};

%錐の名前
%\coordinate (sigma) at (-90:1) node at (sigma)  {$\sigma_{\Gamma}$};

\fill (v) circle (2pt) (1) circle (1pt) (2) circle (1pt) (3) circle (1pt); %黒丸をつける（1つの\fillで複数の点を対象にできる）
      
      \end{tikzpicture}
        \subcaption{Valency 3}
        \label{valency31}
      \end{minipage} &
      \begin{minipage}[t]{0.45\hsize}
        \centering
        \begin{tikzpicture}
\coordinate (v) at (0:0) node at (v) [left] {$v$}; %Divisiorに対応する点 
\coordinate (1) at (90:1.5) node at (1) [above]{$\bm{e}_1$};
\coordinate (2) at (-30:1.5) node at (2) [above]{};
\coordinate (3) at (210:2) node at (3) [left]{$\bm{e}_2$};
\coordinate (4) at (30:2) node at (4) [above]{};
\draw (v) -- (90:1.5) node[pos=0.5] {};
\draw (v) -- (2) node[pos=0.5] {};
\draw (v) -- (4) node[pos=0.5] {};
\draw (v) -- (210:2) node[pos=0.5, sloped] [above]{};

%錐の名前
%\coordinate (sigma) at (-90:1) node at (sigma)  {$\sigma_{\Gamma}$};

\fill (v) circle (2pt) (1) circle (1pt) (3) circle (1pt); %黒丸をつける（1つの\fillで複数の点を対象にできる）

      \end{tikzpicture}
        \subcaption{Valency 4}
        \label{valency4}
      \end{minipage} \\
   
      \begin{minipage}[t]{0.45\hsize}
        \centering
         \begin{tikzpicture}
\coordinate (v) at (0:0) ; %Divisiorに対応する点 
\coordinate (1) at (150:1) node at (1) [above]{$$};
\coordinate (2) at (-30:2) node at (2) [above]{};
\coordinate (3) at (210:2) node at (3) [left]{};
\coordinate (4) at (30:1) node at (4) [above]{};
\coordinate (A) at (-30:1); %
\coordinate (B) at (210:1);
\coordinate (C) at (-90:1) node at (C) [above] {$v$};
\draw (v) -- (1) node[pos=0.5] {};
\draw (v) -- (2) node[pos=1.1] {$L_3$};
\draw (v) -- (4) node[pos=0.5] {};
\draw (v) -- (3) node[pos=1.1] {$L_4$};
\draw ($ (A) !2! (C) $ ) -- ($ (C) !2! (A) $) node[pos=-0.05] {$L_2$}; 
\draw ($ (B) !2! (C) $ ) -- ($ (C) !2! (B) $) node[pos=-0.05] {$L_1$};

%錐の名前
%\coordinate (sigma) at (-90:0.8) node at (sigma)  {$\sigma_{\Gamma}$};

\fill (v) circle (1.5pt)  (A) circle (1.5pt) (B) circle (1.5pt) (C) circle (2pt); %黒丸をつける（1つの\fillで複数の点を対象にできる）

      \end{tikzpicture}
        \subcaption{Two straight lines}
        \label{twostraightline}
      \end{minipage} &
      \begin{minipage}[t]{0.45\hsize}
        \centering
     \begin{tikzpicture}
\coordinate (v) at (0:0) node at (v) [left] {$v$}; %Divisiorに対応する点 
\coordinate (1) at (140:1.5) node at (1) [above]{};
\coordinate (2) at (-30:2) node at (2) [left]{$\bm{e}_3$};
\coordinate (3) at (210:1.5) node at (3) [left]{};
\coordinate (4) at (30:2) node at (4) [above]{};
\coordinate (5) at (70:1.5) node at (5) [above]{};
\draw (v) -- (1) node[pos=0.5] {};
\draw (v) -- (2) node[pos=0.5] {} node[pos=0.5, sloped][above] {};
\draw (v) -- (4) node[pos=0.5] {};
\draw (v) -- (5) node[pos=0.5] {};
\draw (v) -- (3);

%錐の名前
%\coordinate (sigma) at (-90:1) node at (sigma)  {$\sigma_{\Gamma}$};

\fill (v) circle (2pt)  (2) circle (1pt); %黒丸をつける（1つの\fillで複数の点を対象にできる）

      \end{tikzpicture}
        \subcaption{Valency 5}
        \label{valency5}
      \end{minipage} 
    \end{tabular}
     \caption{The degree of the vertex}
  \end{figure}

\section{A generalization of Special McKay correspondence}\label{Main seciton}%%%%%%%%%%%%%%%%%%%%%%%%%%%%%%%%%%%%%%%%%%%%%%%%%%%%%%%%%%%%%

In this section we first define the essential representations (characters) determined by the \emph{socle} of a $G$-graph, and verify the correspondence between essential and special representations for two-dimensional cyclic quotient singularities. After that, we introduce special representations (characters) as an analogue of Reid’s recipe for three-dimensional terminal cyclic quotient singularities, and we present the main result that is the correspondence between special representations and essential representations.

%Let $G$ be a finite subgroup of $\glmc{n}$, then $\compl{n}/G$ has a quotient singularity. In the case of $n=2$, all quotient singularities possess a minimal resolution. It is known that there is the McKay correspondence. 

\subsection{Special and Essential representations }%%%%%%%%%%%%%%%%%%%%%%%%%%%%

\begin{defi}\upshape
An element $X$ of the $G$-graph $\Gamma$ is called a \emph{socle} if $X$ is not a divisor of any other element in $\Gamma$.
In other words, if  $\Gamma={\rm Span}(X_1,\dots,X_m)$, then  $X_1, \dots, X_m$ are socles.
\end{defi}

\begin{defi}\upshape
A representation (resp. character) that corresponds to a socle of the $G$-graph is called an \emph{essential representation} (resp. \emph{essential character}). 
\end{defi}

The socles of a $G$-graph $\Gamma$ correspond to the socle of the $G$-cluster associated with $\Gamma$. Moreover, Takahashi \cite{Takahashi} and Craw--Ito--Karmazyn \cite{CIK} define essential vertices (and hence essential characters) using $0$-generated representations of the McKay quiver with relations. In our abelian case, torus-invariant $0$-generated modules (equivalently, torus-fixed $G$-clusters on $\GHilb{3}$) are in one-to-one correspondence with $G$-graphs. Hence their definition of essential character is equivalent to ours.

Let $G$ be a cyclic group which is generated by $g=\frac{1}{r}(1,a_2,\dots, a_n)$. We will denote by $\chi_i$ the character of $G$ which satisfies $\chi_i(g)=\varepsilon^i$.

\begin{defi}\upshape
  Let $G$ be a finite cyclic group of $\glmc{n}$ with order $r$. We will denote by ${\rm EC}$ the set of essential characters. In addition, for $i=1, \dots, r-1$, a set of generalized essential characters ${\rm EC}(i)$ is defined as follows:
  \[
  {\rm EC}(i)=\{ \chi_i \otimes \chi \mid \chi \in {\rm EC} \} .
  \]
\end{defi}

\begin{defi}\upshape
  Let $\tau$ be a one-dimensional cone of ${\rm Fan}(G)$ and let $D_{\tau}$ be an exceptional divisor of $\GHilb{3}$ associated with $\tau$. 
  We define a set of essential characters associated with $\tau$ as the intersection of essential characters obtained by G-graph $\Gamma$ with a three-dimensional cone $\sigma_{\Gamma}$ containing $\tau$. 
  We will denote by ${\rm EC}(D_{\tau})$ a set of essential characters associated with divisor $D_{\tau}$.  
\end{defi}

\begin{ex}\upshape
In the case of $G=\frac{1}{8}(1,3)$, there are three $G$-graphs: $\Gamma_1={\rm Span}(x^7)$, $\Gamma_2={\rm Span}(x^2y,xy^2)$ and $\Gamma_3={\rm Span}(y^7)$ as in Figure \ref{fig:2-dimexamlple}.
The gray boxes in every $G$-graph indicate its socles. Since ${\rm wt}(x^7)={\rm wt}(xy^2)=\chi_7$ and ${\rm wt}(y^7)={\rm wt}(x^2y)=\chi_5$, we have ${\rm EC}=\{ \chi_5, \chi_7 \}$.
  In addition, we have ${\rm EC}(4)=\{ \chi_1, \chi_3 \}$.
 Let $\tau$ be a one dimensional cone with $\tau=\sigma_{\Gamma_1} \cap \sigma_{\Gamma_2}$. Then ${\rm EC}(D_{\tau})=\{\chi_7\}$.

%%%%%%%%%%%%%%%%%%%%%%%%%%%%%%%%%%%%%%G-graph二次元の具体例
%%%%%%%%%%%%%%%%%%%%%%%%%%%%%%%%%%%%%%%%%
\begin{figure}
  \begin{center}

 \begin{tabular}{ccc}
      \begin{minipage}[t]{0.2\hsize}
\begin{tikzpicture}

%一つ目のG-Graph
\coordinate (1) at (0,0) ;
\coordinate (2) at (0,0.7) ;
\coordinate (3) at (0,1.4) ;
\coordinate (4) at (0,2.1) ;
\coordinate (5) at (0,2.8) ;
\coordinate (6) at (0,3.5) ;
\coordinate (7) at (0,4.2) ;
\coordinate (8) at (0,4.9);
\coordinate (9) at (0,5.6) ;

\coordinate (21) at (0.7,0) ;
\coordinate (22) at (0.7,0.7) ;
\coordinate (23) at (0.7,1.4) ;
\coordinate (24) at (0.7,2.1) ;
\coordinate (25) at (0.7,2.8) ;
\coordinate (26) at (0.7,3.5) ;
\coordinate (27) at (0.7,4.2) ;
\coordinate (28) at (0.7,4.9) ;
\coordinate (29) at (0.7,5.6) ;
\draw (1)--(21)--(29)--(9)--(1);

\draw (2)--(22);
\draw (3)--(23);
\draw (4)--(24);
\draw (5)--(25);
\draw (6)--(26);
\draw (7)--(27);
\draw (8)--(28);

 \fill[lightgray]  (8)--(9)--(29)--(28)--(8);
 \draw   (8)--(9)--(29)--(28)--(8);

\coordinate (a) at (0.35,0.1) node at (a) [above]{$1$};
\coordinate (y) at (0.35,0.8) node at (y) [above]{$y$};
\coordinate (y7) at (0.4,4.9) node at (y7) [above]{$y^7$} ;
      \end{tikzpicture}        
      \end{minipage}
      &

          \begin{minipage}[t]{0.2\hsize}
\begin{tikzpicture}

%2つ目のG-Graph
\coordinate (1) at (0,0) ;
\coordinate (2) at (0,0.7) ;
\coordinate (3) at (0,1.4) ;
\coordinate (4) at (0,2.1) ;

\coordinate (5) at (0.7,0) ;
\coordinate (6) at (0.7,0.7) ;
\coordinate (7) at (0.7,1.4) ;
\coordinate (8) at (0.7,2.1) ;

\coordinate (21) at (1.4,0) ;
\coordinate (22) at (1.4,0.7) ;
\coordinate (23) at (1.4,1.4) ;
\coordinate (24) at (1.4,2.1) ;

\coordinate (25) at (2.1,0) ;
\coordinate (26) at (2.1,0.7) ;
\coordinate (27) at (2.1,1.4) ;
\coordinate (28) at (2.1,2.1) ;

\fill[lightgray] (7)--(8)--(24)--(23)--(7);
\fill[lightgray] (23)--(27)--(26)--(22)--(23);
\draw (1)--(4)--(24)--(23)--(27)--(25)--(1);
\draw (5)--(8);
\draw (21)--(23);
\draw (2)--(26);
\draw (3)--(27);

\coordinate (a) at (0.35,0.1) node at (a) [above]{$1$};
\coordinate (y) at (0.35,0.8) node at (y) [above]{$y$};
\coordinate (x) at (1.05,0.1) node at (x) [above]{$x$} ;
\coordinate (xy) at (1.05,0.8) node at (xy) [above]{$xy$} ;
\coordinate (xy2) at (1.05,1.5) node at (xy2) [above]{$xy^2$} ;
\coordinate (y2) at (0.35,1.5) node at (y2) [above]{$y^2$} ;
\coordinate (x2y) at (1.75,0.8) node at (x2y) [above]{$x^2y$} ;
\coordinate (x2) at (1.75,0.1) node at (x2) [above]{$x^2$} ;
      \end{tikzpicture}        
      \end{minipage}

      &

        \begin{minipage}[t]{0.2\hsize}
\begin{tikzpicture}

%3つ目のG-Graph
\coordinate (1) at (0,0) ;
\coordinate (2) at (0.7,0) ;
\coordinate (3) at (1.4,0) ;
\coordinate (4) at (2.1,0) ;
\coordinate (5) at (2.8,0) ;
\coordinate (6) at (3.5,0) ;
\coordinate (7) at (4.2,0) ;
\coordinate (8) at (4.9,0);
\coordinate (9) at (5.6,0) ;

\coordinate (21) at (0,0.7) ;
\coordinate (22) at (0.7,0.7) ;
\coordinate (23) at (1.4,0.7) ;
\coordinate (24) at (2.1,0.7) ;
\coordinate (25) at (2.8,0.7) ;
\coordinate (26) at (3.5,0.7) ;
\coordinate (27) at (4.2,0.7) ;
\coordinate (28) at (4.9,0.7) ;
\coordinate (29) at (5.6,0.7) ;
\draw (1)--(21)--(29)--(9)--(1);

\draw (2)--(22);
\draw (3)--(23);
\draw (4)--(24);
\draw (5)--(25);
\draw (6)--(26);
\draw (7)--(27);
\draw (8)--(28);

 \fill[lightgray]  (8)--(9)--(29)--(28)--(8);
 \draw   (8)--(9)--(29)--(28)--(8);

\coordinate (a) at (0.35,0.1) node at (a) [above]{$1$};
\coordinate (x) at (1.05,0.1) node at (x) [above]{$x$} ;
\coordinate (x7) at (5.25,0.1) node at (x7) [above]{$x^7$} ;
      \end{tikzpicture}        
      \end{minipage}
 \end{tabular}

  \end{center}

  \caption{G-graphs of $G=\dfrac{1}{8}(1,3)$}
  \label{fig:2-dimexamlple}
\end{figure}

\end{ex}

To compare monomials and socles between adjacent cones in $\mathrm{Fan}(G)$, we introduce the following notion based on the $G$-igsaw transformation.

\begin{defi}\upshape
Let $\sigma$ be a three-dimensional cone in $\mathrm{Fan}(G)$ and let $L$ be a two dimensional face of $\sigma$. Let $\sigma_1$ denote the three dimensional cone obtained from $\sigma$ by a $G$-igsaw transformation along $L$.

Then, we define the set $G\text{-ig}(\sigma, L)$ as the set of monomials that appear in the G-graph $\Gamma_{\sigma_1}$ but do not appear in the G-graph $\Gamma_\sigma$:
\[
G\text{-ig}(\sigma, L) := \Gamma_{\sigma_1} \setminus \Gamma_\sigma.
\]

We further define the \emph{socle} of $G\text{-ig}(\sigma, L)$ to be the set of monomials in $G\text{-ig}(\sigma, L)$ which are not \emph{proper} divisors of any other monomial in $G\text{-ig}(\sigma, L)$.
\end{defi}
Here, the $G$-igsaw transformation along $L$ is in the sense of Kedzierski (and Nakamura), see Definition 4.2 in \cite{Kedzierski}.\\

\begin{prop}\label{rem:gigsawsocle}\upshape
If a monomial $X$ is a socle of the G-graph $\Gamma_\sigma$ but $X \notin \Gamma_{\sigma_1}$, then the character $\chi = \mathrm{wt}(X)$ associated to $X$ satisfies the following: the monomial $X' \in \Gamma_{\sigma_1}$ such that $\mathrm{wt}(X') = \chi$ must belong to $G\text{-ig}(\sigma, L)$, and moreover, $X'$ is a socle in $G\text{-ig}(\sigma, L)$. In particular, the socle of $G\text{-ig}(\sigma, L)$ is also contained in the socle of $\Gamma_{\sigma_1}$.
\end{prop}
Proof.
%By Kedzierski’s explicit description of the $G$-igsaw transformations in the case of $\dfrac{1}{r}(1,a,r-a)$, the transformation replaces a divisor-closed region by its image under multiplication by a fixed Laurent monomial; hence divisibility is preserved on that region, and maximal elements map to maximal elements.

Since ${\rm wt}|_{\Gamma_\sigma}:\Gamma_{\sigma} \to G^{\vee}$ is bijective and $X \in \Gamma_\sigma$, we have $X'\notin\Gamma_\sigma$. Thus, by definition, $X'\in \Gamma_{\sigma_1}\setminus\Gamma_\sigma = G\text{-}ig(\sigma,L)$. In the cyclic case $\frac1r(1,a,r-a)$, the $G$-igsaw transformation along $L$ is one of the elementary
transformations in \cite[Lem.~4.4--4.8]{Kedzierski}, and is given on the replaced region by multiplication by a fixed
Laurent monomial. Therefore divisibility is preserved on that region, so maximal elements map to maximal elements.
Since $X\notin\Gamma_{\sigma_1}$, the monomial $X$ lies in the replaced region and its image is precisely $X'$,
hence $X'$ is a socle of $G\text{-}ig(\sigma,L)$. Finally, $\Gamma_\sigma$ is the set of standard monomials of a monomial ideal, so any socle of $G\text{-}ig(\sigma,L)$ is also a socle of $\Gamma_{\sigma_1}$.
\qed

\begin{prop}\label{prop:es-sc-two}
Let $G=\frac{1}{r}(1,a)$, where $r$ and $a$ are coprime. Let $X$ be a socle of $G$-graph. Then a representation determined by a character ${\rm wt}(X\cdot xy)$ is a special representation. Namely, a representation determined by a character in ${\rm EC}(1+a)$ is a special representation. 
\end{prop}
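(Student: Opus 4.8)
The plan is to reduce the statement to the identity
$\mathrm{EC}(1+a)=\{\mathrm{wt}(X\cdot xy)\mid X\text{ a socle}\}$ and to prove that each such character is special by matching the combinatorial socle data against Wunram's special McKay correspondence. First I would record the elementary computation that reconciles the two formulations of the proposition: if $X=x^my^n$ is a socle then $X\cdot xy=x^{m+1}y^{n+1}$, so $\mathrm{wt}(X\cdot xy)=\chi_{(m+na)+(1+a)}=\chi_{1+a}\otimes\mathrm{wt}(X)$. Since $\mathrm{wt}(X)$ ranges over all of $\mathrm{EC}$ as $X$ ranges over socles, the set $\{\mathrm{wt}(X\cdot xy)\}$ is exactly $\mathrm{EC}(1+a)$, and it suffices to show that $\chi_{1+a}\otimes\chi$ is special for every $\chi\in\mathrm{EC}$.

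Next I would set up the geometry. For $G=\frac1r(1,a)$ the scheme $\GHilb{2}$ is the minimal (Hirzebruch--Jung) resolution $\widetilde{X}\to\mathbb{C}^2/G$; its torus-fixed points correspond to the $G$-graphs $\Gamma_0,\dots,\Gamma_n$, which form a single chain in which consecutive graphs are related by one $G$-igsaw transformation across a shared one-dimensional cone $\tau_k$, the ray of the $k$-th exceptional curve $E_k$. Using the staircase shape of a two-dimensional $G$-graph together with Remark~\ref{rem:gigsawsocle}, I would show that the nontrivial essential characters are precisely the weights of the \emph{shared socles}: across the edge joining $\Gamma_{k-1}$ and $\Gamma_k$ there is a monomial that is a socle of $\Gamma_{k-1}$ and whose weight is again realised by a socle of $\Gamma_k$, and this common weight is exactly the essential character $\mathrm{EC}(D_{\tau_k})$ attached to $E_k$. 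As every socle (including the two extremal pure powers in $\Gamma_0$ and $\Gamma_n$) is matched across some edge, this yields a bijection $\mathrm{EC}\cong\{E_1,\dots,E_n\}$ and in particular $\lvert\mathrm{EC}\rvert=n$.

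Finally --- and this is the crux --- I would invoke Wunram's correspondence~\cite{Wunram}, by which the nontrivial special representations of $\frac1r(1,a)$ are in bijection with $E_1,\dots,E_n$, the special sheaf $\mathcal{M}$ attached to $E_k$ being characterised by $c_1(\mathcal{M})\cdot E_j=\delta_{kj}$. The task then becomes to show that the special character carried by $E_k$ equals $\chi_{1+a}\otimes\mathrm{wt}(X_k)$, where $X_k$ is the shared socle of the previous step. Conceptually the twist by $\chi_{1+a}=\mathrm{wt}(xy)$ is the passage from the socle of the Artinian $G$-cluster (the algebraic datum) to the tautological label of $E_k$ (the geometric datum), i.e.\ the canonical/dualising shift on the surface. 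The main obstacle is precisely this identification: it requires either a direct toric first-Chern-class computation on $\widetilde{X}$ verifying that the reflexive sheaf of $\chi_{1+a}\otimes\mathrm{wt}(X_k)$-semiinvariants meets $E_j$ in $\delta_{kj}$, or an identification of this sheaf with the corresponding summand of the tautological bundle on $\GHilb{2}$ whose Chern classes are known from~\cite{IN} (alternatively, one may compare directly against the explicit list of special representations for cyclic surface quotients). Once this holds for each $k$, every character of $\mathrm{EC}(1+a)$ is special, which is the assertion; and since both finite sets have cardinality $n$, the inclusion is in fact an equality.
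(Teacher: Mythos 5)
Your opening reduction is correct (indeed $\mathrm{wt}(X\cdot xy)=\chi_{1+a}\otimes\mathrm{wt}(X)$, so the two formulations of the proposition agree), and your overall architecture is viable, but the proof has a genuine gap exactly where you flag it: the identification of $\chi_{1+a}\otimes\mathrm{wt}(X_k)$ with the special character that Wunram's correspondence attaches to the exceptional curve $E_k$ is the entire content of the proposition, and you leave it as ``the main obstacle'' together with a menu of possible techniques (a toric $c_1$ computation, an identification with a tautological summand, or comparison with the known list of special characters), none of which is carried out. Until one of those is executed, nothing is proved. In addition, the intermediate claim that shared socles across adjacent cones set up a bijection $\mathrm{EC}\cong\{E_1,\dots,E_n\}$ is itself only asserted (``I would show that\dots''), and your final cardinality argument depends on it; so the proposal is an outline whose two substantive steps are both deferred.

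The paper closes precisely this gap by a short explicit computation rather than by Chern classes. It quotes Kidoh's theorem, which parametrizes the torus-fixed ideals of $\GHilb{2}$ as $I_k=\bigl(x^{i_{k-1}}-p_ky^{j_{k-1}},\ y^{j_k}-q_kx^{i_k},\ x^{i_{k-1}-i_k}y^{j_k-j_{k-1}}-p_kq_k\bigr)$, with the exponents $i_k,j_k$ read off the Hirzebruch--Jung continued fraction $\frac{r}{a}=[b_1,\dots,b_s]$. From this description the socles of $\Gamma(I_k)$ are explicitly $x^{i_{k-1}-1}y^{j_k-j_{k-1}-1}$ and $x^{i_{k-1}-i_k-1}y^{j_k-1}$, and a three-line weight computation using $\mathrm{wt}(x^{i_{k-1}})=\mathrm{wt}(y^{j_{k-1}})$ gives $\mathrm{wt}\bigl(x^{i_{k-1}-1}y^{j_k-j_{k-1}-1}\cdot xy\bigr)=\mathrm{wt}(y^{j_k})$, and similarly $\mathrm{wt}(x^{i_{k-1}})$ for the other socle; these are exactly the classically known special characters of $\frac1r(1,a)$, one per exceptional curve. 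In other words, the paper realizes your third suggested route, with Kidoh's parametrization replacing the need to verify Wunram's criterion $c_1(\mathcal{M})\cdot E_j=\delta_{kj}$ sheaf by sheaf; note that both arguments ultimately rest on the classical two-dimensional special McKay correspondence, but the paper's use of Kidoh's coordinates turns the crux into a finite, explicit check. If you want to complete your own version, this is the cleanest way to do it.
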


We use the following theorem to prove this proposition.

\begin{thm} \upshape{(\cite{Kidoh})}
Let $G=\frac{1}{r}(1,a)$, and let $\frac{r}{a}=[b_1,\dots,b_s]$ be the Hirzebruch-Jung continued fractions. Then $\GHilb{2}$ set-theoretically consists of the following $G$-invariant ideals.

$$
I_k(p_k, q_k)=(x^{i_{k-1}}-p_k y^{j_{k-1}}, y^{j_k}-q_k x^{i_k}, x^{i_{k-1}-i_{k}}y^{j_k-j_{k-1}}-p_kq_k),
$$
where $k=1,\dots,s+1$ and $(p_k,q_k) \in \compl{2}$.
\end{thm}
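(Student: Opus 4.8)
The plan is to realize $\GHilb{2}$ as the fine moduli space of $G$-clusters and to match its torus charts with the family $I_k(p_k,q_k)$. Recall that a point of $\GHilb{2}$ is a $G$-invariant ideal $I\subset\C[x,y]$ of colength $r$ whose quotient $\C[x,y]/I$ affords the regular representation of $G$; for the action $x\mapsto\varepsilon x,\ y\mapsto\varepsilon^a y$ with $\varepsilon=e^{2\pi i/r}$ this means every character $\chi_0,\dots,\chi_{r-1}$ occurs with multiplicity one. First I would fix the exponent data attached to $\tfrac ra=[b_1,\dots,b_s]$ by the recursions $i_0=r,\ i_1=a,\ i_{k+1}=b_ki_k-i_{k-1}$ and $j_0=0,\ j_1=1,\ j_{k+1}=b_kj_k-j_{k-1}$, so that $i_{s+1}=0$, $j_{s+1}=r$, and the sequences $(i_k)$, $(j_k)$ are respectively strictly decreasing and increasing. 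Two elementary facts drive everything: an induction on the recursion gives the congruence $i_k\equiv a\,j_k\ (\mathrm{mod}\ r)$, and the ``determinant'' $i_{k-1}j_k-i_kj_{k-1}$ is constant in $k$, hence equals $i_0j_1-i_1j_0=r$.

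Using these I would verify that for every $k$ and every $(p_k,q_k)\in\C^2$ the ideal $I_k(p_k,q_k)$ is a $G$-cluster. The congruence $i_m\equiv a\,j_m$ makes each binomial $x^{i_{k-1}}-p_ky^{j_{k-1}}$ and $y^{j_k}-q_kx^{i_k}$ join two monomials of equal weight, and makes $x^{i_{k-1}-i_k}y^{j_k-j_{k-1}}$ have weight $0$, so $I_k(p_k,q_k)$ is $G$-invariant. Setting $p_k=q_k=0$ gives the monomial ideal whose standard monomials form the $L$-shaped staircase $\Gamma_k=[0,i_{k-1})\times[0,j_k)\setminus[i_{k-1}-i_k,i_{k-1})\times[j_k-j_{k-1},j_k)$; its cardinality is the determinant $i_{k-1}j_k-i_kj_{k-1}=r$, and the congruence shows these $r$ monomials realize each weight once, so $\Gamma_k$ is a $G$-graph. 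For general parameters I would run a flat-family argument: with respect to a degree order having $x^{i_{k-1}}$, $y^{j_k}$, $x^{i_{k-1}-i_k}y^{j_k-j_{k-1}}$ as leading terms, the three given generators form a Gröbner basis with initial ideal $I(\Gamma_k)$ independent of $(p_k,q_k)$, so $\C[x,y]/I_k(p_k,q_k)$ has constant basis $\Gamma_k$, dimension $r$, and weight multiplicities all one; hence it is the regular representation.

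For the converse I would show these ideals exhaust $\GHilb{2}$. By Theorem~\ref{Nakamura} the torus-fixed points of $\GHilb{2}$ are exactly the $G$-graphs, and the continued fraction produces precisely $\Gamma_1,\dots,\Gamma_{s+1}$, one per $2$-dimensional cone $\sigma(\Gamma_k)$; since each such cone is smooth, the chart it indexes is isomorphic to $\C^2$. Given an arbitrary $G$-cluster $I$, a generic one-parameter subgroup of the torus degenerates $I$ to a monomial $G$-cluster, necessarily some $\Gamma_k$, so $I$ lies in the chart centred at $[\Gamma_k]$. Reducing $x^{i_{k-1}}$ and $y^{j_k}$ modulo $I$ against the basis $\Gamma_k$ writes them as $p_k\,y^{j_{k-1}}$ and $q_k\,x^{i_k}$ (the unique basis monomials of the matching weight), which both identifies the chart with $\C^2_{(p_k,q_k)}$ and, after noting that the reduction of $x^{i_{k-1}-i_k}y^{j_k-j_{k-1}}$ is forced to equal $p_kq_k$ by the multiplicativity relation $x^{i_{k-1}}y^{j_k}\equiv p_kq_k\,x^{i_k}y^{j_{k-1}}$, yields $I=I_k(p_k,q_k)$. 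A short gluing check on consecutive charts (where $q_k\neq0$ corresponds to $p_{k+1}\neq0$ under the recursion) assembles the $\Gamma_k$ into the expected chain of rational curves and confirms that the charts cover $\GHilb{2}$.

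The main obstacle is the colength computation: the two binomials alone do \emph{not} cut out a $G$-cluster, since their quotient generically has length strictly greater than $r$ (for instance for $\tfrac15(1,2)$ one finds length $6$ on the middle chart), and the third generator $x^{i_{k-1}-i_k}y^{j_k-j_{k-1}}-p_kq_k$ is precisely the primitive relation that restores length $r$ while keeping the family flat. Establishing that the three relations form a Gröbner basis uniformly in $(p_k,q_k)$—equivalently, that the relevant $S$-polynomials reduce to zero using the weight bookkeeping above—is the technical heart of the argument; once this is in place, flatness and the regular-representation property are immediate, and the parametrization by $(p_k,q_k)\in\C^2$ is automatic.
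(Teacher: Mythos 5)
The paper does not prove this statement at all: it is quoted verbatim from Kidoh \cite{Kidoh} and used as a black box in the proof of Proposition~\ref{prop:es-sc-two}, so there is no in-paper argument to compare yours against. Judged on its own, your proposal is essentially a correct reconstruction of the standard Nakamura--Kidoh chart-by-chart argument, and its technical core is sound: with leading terms $x^{i_{k-1}}$, $y^{j_k}$, $x^{i_{k-1}-i_k}y^{j_k-j_{k-1}}$ one indeed gets $S(f_1,f_3)=-p_kf_2$ and $S(f_2,f_3)=-q_kf_1$, the remaining pair having coprime leading terms, so the three generators form a Gr\"obner basis uniformly in $(p_k,q_k)$, the initial ideal is $I(\Gamma_k)$, and colength $r$ plus multiplicity-one weights follow from the constant determinant $i_{k-1}j_k-i_kj_{k-1}=r$ and the congruence $i_k\equiv a j_k \pmod r$ (the staircase is a fundamental domain for the lattice spanned by $(i_{k-1},-j_{k-1})$ and $(-i_k,j_k)$).

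Two steps need tightening. First, your forced-value argument for the third generator pivots on the relation $x^{i_{k-1}}y^{j_k}\equiv p_kq_k\,x^{i_k}y^{j_{k-1}}$, and to conclude $c=p_kq_k$ from $(c-p_kq_k)\,x^{i_k}y^{j_{k-1}}\in I$ you need $x^{i_k}y^{j_{k-1}}$ to be nonzero modulo $I$; but $x^{i_k}y^{j_{k-1}}$ need \emph{not} be a basis monomial of $\Gamma_k$. For $G=\frac17(1,5)$, where $\frac75=[2,2,3]$ gives $(i_0,\dots,i_4)=(7,5,3,1,0)$ and $(j_0,\dots,j_4)=(0,1,2,3,7)$, the monomial $x^{i_2}y^{j_1}=x^3y$ lies in the removed box of $\Gamma_2$, and its reduction could a priori vanish for special parameters. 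The repair is easy: compare instead $x^{i_{k-1}}y^{j_k-j_{k-1}}\equiv p_k y^{j_k}\equiv p_kq_k\,x^{i_k}$ (via $f_1,f_2$) with $x^{i_{k-1}}y^{j_k-j_{k-1}}=x^{i_k}\cdot x^{i_{k-1}-i_k}y^{j_k-j_{k-1}}\equiv c\,x^{i_k}$, since $x^{i_k}$ \emph{always} lies in $\Gamma_k$. Second, the assertion that the continued fraction produces \emph{all} $G$-graphs, i.e.\ that every monomial $G$-cluster is one of the L-shaped $\Gamma_1,\dots,\Gamma_{s+1}$ with exponents satisfying the Hirzebruch--Jung recursion, is stated as automatic, but this classification is the actual technical heart of Kidoh's paper and requires its own induction on the staircase shape; relatedly, you should not \emph{assume} smoothness of the cones $\sigma(\Gamma_k)$ to identify charts with $\mathbb{C}^2$ --- your own explicit parametrization by $(p_k,q_k)$ delivers that conclusion, so the argument should be reorganized to avoid the circularity.
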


\textbf{The proof of Proposition \ref{prop:es-sc-two}.}\\
Let $\Gamma(I_k)$ be a $G$-graph obtained by $I_k$.
Since ${\rm wt}(x^{i_{k-1}})={\rm wt}(y^{j_k})$ and ${\rm wt}(x^{i_k})={\rm wt}(y^{j_{k-1}})$,
the socle of $\Gamma(I_k)$ are
$
x^{i_{k-1}-1}\cdot y^{j_k-j_{k-1}-1}
$
and 
$
x^{i_{k-1}-i_k-1}\cdot y^{j_k-1}.
$
Therefore we have the following formula:
\begin{eqnarray*}
{\rm wt}(x^{i_{k-1}-1}\cdot y^{j_k-j_{k-1}-1})\cdot {\rm wt}(xy) &=& {\rm wt}(x^{i_{k-1}})\cdot {\rm wt}( y^{j_k-j_{k-1}}) \\
&=& {\rm wt}(y^{j_{k-1}}) \cdot {\rm wt}(y^{j_k-j_{k-1}}) \\*
&=& {\rm wt}(y^{j_k}).
\end{eqnarray*}
Since ${\rm wt}(y^{j_k})$ is a special character and ${\rm wt}(xy)=\chi_{1+a}$, the character ${\rm wt}(x^{i_{k-1}-1}\cdot y^{j_k-j_{k-1}-1})$ is in ${\rm EC}(1+a)$.
A similar argument holds for $x^{i_{k-1}-i_k-1}\cdot y^{j_k-1}$.\qed

\subsection{Special character for toric terminal $3$-fold}\label{sec:specialcharacter}
In this subsection, we fix $G=\frac{1}{r}(1,a,r-a)$, where $r$ and $a$ are coprime and $a < r-a$. Then the quotient $\compl{3}/G$ has a terminal singularity.
For $g=\frac{1}{r}(1,a,r-a) \in G$, we denote by $\chi_i$ the character of $G$ which satisfies $\chi_i(g)=\varepsilon^i$. Note that ${\rm wt}(x)=\chi_1$, ${\rm wt}(y)=\chi_a$ and ${\rm wt}(z)=\chi_{r-a}$.

For each compact divisor of $\GHilb{3}$, that is each one dimensional cone of ${\rm Fan}(G)$ except the rays generated by $\bm{e}_1, \bm{e}_2$ and $\bm{e}_3$, we define a special character.
According to lemma \ref{degree}, we consider the following four cases.
               
{\large \textbf{Case 1: a vertex $v$ of valency $3$.}}\\*
A vertex $v$ of valency $3$ occurs only when a line $L_1$ emanating from $\bm{e}_1$ intersects with lines $L_2$ and $L_3$ from $\bm{e}_2$ and $\bm{e}_3$, respectively. Such a vertex is unique in the cone. The monomial ratios cutting out $L_1$, $L_2$ and $L_3$ are of the forms $y^b:z^c$, $z^c:x$ and $x:y^b$, respectively. We decorate $v$ with the character ${\rm wt}(x)\otimes {\rm wt}(x)=\chi_2$.

\begin{figure}[h]
  \begin{minipage}[b]{0.48\columnwidth}
    \centering
   \begin{tikzpicture}
\coordinate (v) at (0:0) node at (v) [left] {$v$}; %Divisiorに対応する点 
\coordinate (1) at (90:2) node at (1) [above]{$\bm{e}_1$};
\coordinate (2) at (-30:3) node at (2) [above]{$\bm{e}_3$};
\coordinate (3) at (210:3) node at (3) [left]{$\bm{e}_2$};
\draw (v) -- (90:2) node[pos=0.5] {\ $y^b:z^c$};
\draw (v) -- (-30:3) node[pos=0.5, sloped] [above]{$x:y^b$};
\draw (v) -- (210:3) node[pos=0.5, sloped] [above]{$z^c:x$};

%錐の名前
%\coordinate (sigma) at (-90:1) node at (sigma)  {$\sigma_{\Gamma}$};

\fill (v) circle (2pt) (1) circle (1pt) (2) circle (1pt) (3) circle (1pt); %黒丸をつける（1つの\fillで複数の点を対象にできる）
      
      \end{tikzpicture}
    \caption{Vertex $v$ of valency $3$}
  \end{minipage}
  \hspace{0.04\columnwidth} % ここで隙間作成
  \begin{minipage}[b]{0.48\columnwidth}
    \centering
   \begin{tikzpicture}
\coordinate (v) at (0:0) node at (v) [left] {$v$}; %Divisiorに対応する点 
\coordinate (1) at (90:2) node at (1) [above]{$\bm{e}_1$};
\coordinate (2) at (-30:2) node at (2) [above]{};
\coordinate (3) at (210:3) node at (3) [left]{$\bm{e}_2$};
\coordinate (4) at (30:3) node at (4) [above]{};
\draw (v) -- (90:2) node[pos=0.5] {$\  y^j:z^i$};
\draw (v) -- (2) node[pos=0.5] {};
\draw (v) -- (4) node[pos=0.5] {};
\draw (v) -- (210:3) node[pos=0.5, sloped] [above]{$x:z^k$};

%錐の名前
%\coordinate (sigma) at (-90:1) node at (sigma)  {$\sigma_{\Gamma}$};

\fill (v) circle (2pt) (1) circle (1pt) (3) circle (1pt); %黒丸をつける（1つの\fillで複数の点を対象にできる）

      \end{tikzpicture}
    \caption{Vertex $v$ of valency $4$}\label{fig:valency4}
  \end{minipage}
\end{figure}

{\large \textbf{Case 2: a vertex $v$ of valency $4$ (not two straight lines).}}\\*
A vertex $v$ of valency $4$ occurs when a line from $\bm{e}_2$ or $\bm{e}_3$ defeats lines emanating from $\bm{e}_1$.
If $v$ is connected to $\bm{e}_2$, the monomial ratios for edges $\bm{e}_1v$ and $\bm{e}_2v$ are $y^j:z^i$ and $x:z^k$, respectively. In this case, we decorate $v$ with the character ${\rm wt}(x)\otimes {\rm wt}(y^j)$. If $v$ is connected to $\bm{e}_3$, and the edge $\bm{e}_3v$ has the ratio $x:y^{j}$, we again decorate $v$ with the character ${\rm wt}(x)\otimes {\rm wt}(y^j)$.

{\large \textbf{Case 3: a vertex $v$ with two straight lines.}}\\
When two straight lines intersect at a vertex $v$, a quadrilateral arises  with $v$ as one of its vertices. Let $L_1$ and $L_2$ be the two intersecting lines at a vertex $v$, and let $L_3$ and $L_4$ be the opposite sides parallel to $L_1$ and $L_2$, respectively.
Let $\Gamma$ be a $G$-graph corresponding to the cone determined by these four segments $L_1, L_2, L_3$ and $L_4$. Since $\Gamma$ has two valley, we have $\Gamma={\rm Span}(x^{i_x-1}y^{k_y}, x^{k_x}y^{j_y-1}, x^{i_x-1}z^{j_z}, x^{j_x}z^{k_z-1})$, and the monomial ratios of $L_1$ and $L_2$ are $y^{j_y-1}:x^{j_x+1}z^{j_z+1}$ and  $z^{k_z-1}:x^{k_x+1}y^{k_y+1}$, respectively.
%Kedzerskiの論文を引用した方が良いですが２章に出てくると思います
We decorate $v$ with two characters $\chi_{\ell}$ and $\chi_m$ which satisfy $\chi_{\ell} \otimes \chi_m ={\rm wt}(x^{i_x})\otimes {\rm wt}(y^{j_y-1})\otimes {\rm wt}(z^{k_z-1})$ and ${\rm wt}(z^{j_z+1})\otimes \chi_{\ell}={\rm wt}(y^{k_y+1})\otimes \chi_m$.

\begin{figure}[h]
  \centering
  \begin{tikzpicture}
\coordinate (v) at (0:0); %Divisiorに対応する点 
\coordinate (1) at (150:1) node at (1) [above]{$$};
\coordinate (2) at (-30:3) node at (2) [above]{};
\coordinate (3) at (210:3) node at (3) [left]{};
\coordinate (4) at (30:1) node at (4) [above]{};
\coordinate (A) at (-30:1.5); %
\coordinate (B) at (210:1.5);
\coordinate (C) at (-90:1.5) node at (C) [above] {$v$};
\draw (v) -- (1) node[pos=0.5] {};
\draw (v) -- (2) node[pos=1.1] {$L_3$};
\draw (v) -- (4) node[pos=0.5] {};
\draw (v) -- (3) node[pos=1.1] {$L_4$};
\draw ($ (A) !2! (C) $ ) -- ($ (C) !2! (A) $) node[pos=-0.05] {$L_2$}; 
\draw ($ (B) !2! (C) $ ) -- ($ (C) !2! (B) $) node[pos=-0.05] {$L_1$};

%錐の名前
\coordinate (sigma) at (-90:0.8) node at (sigma)  {$\sigma_{\Gamma}$};

\fill (v) circle (1.5pt)  (A) circle (1.5pt) (B) circle (1.5pt) (C) circle (2pt); %黒丸をつける（1つの\fillで複数の点を対象にできる）

      \end{tikzpicture}
  \caption{Vertex $v$ with two straight line}
  \label{fig:twostraightline}
\end{figure}

{\large \textbf{Case 4: a vertex $v$ of valency $5$.}}\\*
A vertex $v$ of valency $5$ appears only on the line segment from $\bm{e}_3$, and only along the perimeter of a regular triangle of side $k$.
 There are uniquely determined characters $\chi_l$ and $\chi_m$ which each mark a pair of lines meeting at $v$. The remaining line is decorated with a distinct character. The vertex $v$ is marked with $\chi_l \otimes \chi_m$.

\begin{figure}[h]
  \centering
  \begin{tikzpicture}
\coordinate (v) at (0:0) node at (v) [left] {$v$}; %Divisiorに対応する点 
\coordinate (1) at (140:2) node at (1) [above]{};
\coordinate (2) at (-30:3) node at (2) [left]{$\bm{e}_3$};
\coordinate (3) at (210:2) node at (3) [left]{};
\coordinate (4) at (30:3) node at (4) [above]{};
\coordinate (5) at (70:2) node at (5) [above]{};
\draw (v) -- (1) node[pos=0.5] {};
\draw (v) -- (2) node[pos=0.5] {} node[pos=0.5, sloped][above] {$x^d:y^b$};
\draw (v) -- (4) node[pos=0.5] {};
\draw (v) -- (5) node[pos=0.5] {};
\draw (v) -- (3);

%錐の名前
%\coordinate (sigma) at (-90:1) node at (sigma)  {$\sigma_{\Gamma}$};

\fill (v) circle (2pt)  (2) circle (1pt); %黒丸をつける（1つの\fillで複数の点を対象にできる）

      \end{tikzpicture}
  \caption{Vertex $v$ of valency $5$}
  \label{fig:valency5}
\end{figure}

\begin{rem}\upshape
In Case 3, the appearance of the character ${\rm wt}(x^{i_x})$ should be understood in light of Section 2.  After taking a small resolution $\widetilde{\GHilb{3}}$ of  $\GHilb{3}$, additional exceptional curves appear. The characters ${\rm wt}(x^{i_x}), {\rm wt}(y^{j_y-1}), {\rm wt}(z^{k_z-1})$ are precisely determined by the monomial ratios defining these exceptional curves. Thus, the decoration rule in Case 3 reflects the geometry of the small resolution rather than $\GHilb{3}$ itself.

\end{rem}

\begin{rem}\label{rem:case3welldefined}\upshape
In Case~3 the above decoration rule is well-defined. Indeed, we set
\[
\chi_\ell := \mathrm{wt}\!\left(x^{i_x}z^{j_z+1}\right), \qquad
\chi_m := \mathrm{wt}\!\left(x^{i_x}y^{k_y+1}\right).
\]
Since $i_x=k_x+j_x+2$ and ${\rm wt}(z^{k_z})={\rm wt}(x^{k_x+1}y^{k_y})$, we have $\mathrm{wt}\!\left(x^{i_x}y^{k_y+1}\right) =  \mathrm{wt}\!\left(x^{j_x+1}z^{k_z-1}\right).$
Hence
\[
\chi_\ell \otimes \chi_m
= \mathrm{wt}\!\left(x^{i_x}z^{j_z+1}\right)\otimes \mathrm{wt}\!\left(x^{j_x+1}z^{k_z-1}\right)
= \mathrm{wt}\!\left(x^{i_x}y^{j_y}z^{k_z}\right)
= \mathrm{wt}(x^{i_x}) \otimes \mathrm{wt}(y^{j_y-1}) \otimes \mathrm{wt}(z^{k_z-1}),
\]
%and, using $\mathrm{wt}(z^{k_z})=\mathrm{wt}(x^{k_x}y^{k_y})$, we also have
%\[
%\mathrm{wt}(z^{j_z}) \otimes \chi_\ell
%= \mathrm{wt}(y^{k_y}) \otimes \chi_m.
%\]
Therefore the decoration rule in Case~3 is well-defined.
\end{rem}

\begin{defi}\upshape
  Let $G$ be a cyclic group of type $\frac{1}{r}(1,a,r-a)$, and let $v$ be a vertex in ${\rm Fan}(G)$. The special character associated with $v$ is the character assigned via the above method. 
  In addition, for the irreducible divisor $D_v$ which is determined by $v$, we denote the set of associated special characters by ${\rm SC}(D_v)$. 
  Similarly, ${\rm SC}(G)$  denotes the set of all such special characters.
\end{defi}

The essential character at a vertex $v$ is determined from the socles of the 
$G$-graphs of the three–dimensional cones containing $v$, while the special character is determined from the two–dimensional cones through $v$. Following this case–by–case definition of the special character, we compute these two decorations in each case and compare them.

\begin{thm}\label{thm:es-sc-terminal}
Let $G$ be a cyclic group of type $\frac{1}{r}(1,a,r-a)$. For a compact divisor $D$ of $\GHilb{3}$, we have  ${\rm SC}(D)={\rm EC}(D)\otimes \chi_1$, where ${\rm EC}(D)\otimes\chi_1 := \{\chi\otimes\chi_1 \mid \chi\in {\rm EC}(D)\}$.
\end{thm}

\proofname \ We will prove case by case as above.

{\bf Case $1$}: Let $v$ be the vertex of valency $3$ marked with $\chi_2$ (see Figure \ref{valency31}).
In this case, there are three cones $\sigma_1$, $\sigma_2$ and $\sigma_3$ with a vertex $v$. The corresponding $G$-graphs are $\Gamma_{\sigma_1}={\rm Span}(y^i,z^{j-1})$, $\Gamma_{\sigma_2}={\rm Span}(y^{i-1},z^{j})$ and $\Gamma_{\sigma_3}={\rm Span}(x,y^{i-1},z^{j-1})$ respectively.  Since ${\rm wt}(y^i)={\rm wt}(z^j)={\rm wt}(x)=\chi_1$, we have ${\rm EC}(D_v)= \{\chi_1\}$. Therefore ${\rm EC}(D_v) \otimes \chi_1 =\{ \chi_2 \} ={\rm SC}(D_v)$.

{\bf Case $2$}: Suppose $v$ is a vertex of valency $4$ such that a line from $\bm{e}_2$ defeats lines emanating from $\bm{e}_1$. There are two pairs of lines with monomial ratios $y^i:z^j$ and $x:z^k$  (see Figure \ref{fig:valency4}). Then the special character is ${\rm SC}(D_v)={\rm wt}(y^i) \otimes {\rm wt}(x)$. Let $\sigma_1$ and $\sigma_2$ be the cones with edge $\bm{e}_1v$. Since $\Gamma_{\sigma_1}={\rm Span}(y^i,z^{j-1})$ and  $\Gamma_{\sigma_2}={\rm Span}(y^{i-1},z^{j})$, we have ${\rm EC}(D_v)={\rm wt}(y^i)$. It leads to  ${\rm EC}(D_v) \otimes \chi_1 ={\rm SC}(D_v)$.

{\bf Case $3$}: Let $v$ be a vertex where two straight lines $L_1$ and $L_2$ intersect, forming a quadrilateral. The monomial ratios of $L_1$ and $L_2$ are $y^{j_y-1}:x^{j_x+1}z^{j_z+1}$ and $z^{k_z-1}:x^{k_x+1}y^{k_y+1}$ , respectively.
Let $L_3$ and $L_4$ be the lines parallel to $L_1$ and $L_2$ , and these two lines have monomial ratios  $y^{j_y}:x^{j_x+1}z^{j_z}$ and $z^{k_z}:x^{k_x+1}y^{k_y}$, respectively. 
 Let $\sigma$ be the cone defined by these four lines. By Remark \ref{rem:two-valley}, the $G$-graph associated with $\sigma$ is given by:
\[
\Gamma_{\sigma}={\rm Span}(x^{i_x-1}y^{k_y}, x^{k_x}y^{j_y-1}, x^{i_x-1}z^{j_z}, x^{j_x}z^{k_z-1})
\]
Consider the $G$-igsaw transformation of $\sigma$ with respect to $L_1$ and $L_2$, yielding cones $\sigma_1$ and $\sigma_2$ (they have two valleys), respectively. Their corresponding $G$-graphs are:
\[
\Gamma_{\sigma_1}={\rm Span}(x^{i_x-1}y^{k_y},
x^{k_x}y^{j_y-2}, x^{i_x-1}z^{j_z+1}, x^{j_x}z^{k_z-1}),
\]
\[
\Gamma_{\sigma_2}={\rm Span}(x^{i_x-1}y^{k_y+1}, x^{k_x}y^{j_y-2}, x^{i_x-1}z^{j_z}, x^{j_x}z^{k_z-2}) .
\]

Figure \ref{fig:G-igsaw} shows the $G$-igsaw transformation with respect to $L_1$.
The monomials in the thickly outlined area are replaced by the shaded area (i.e. $G\mathchar`-ig(\sigma_2,L_1)$ corresponds to the shaded area). The two gray boxes indicate monomials of the same character. 
The common characters of socle among the $G$-graphs $\Gamma_{\sigma}$, $\Gamma_{\sigma_1}$ and $\Gamma_{\sigma_2}$ are ${\rm wt}(x^{i_x-1}y^{k_y+1})={\rm wt}(x^{j_x}z^{k_z-1})$ and ${\rm wt}(x^{i_x-1}z^{j_z+1})={\rm wt}(x^{k_x}y^{j_y-1})$.
We set $\sigma_{12}$ to be the cone obtained by the $G$-igsaw transformation of $\sigma_2$ with respect to $L_1$. It is easy to see that $\sigma_{12}$ coincides with the cone which is obtained by the $G$-igsaw transformation of $\sigma_1$ with respect to $L_2$. By Proposition \ref{rem:gigsawsocle}, the socle of $G\mathchar`-ig(\sigma_2,L_1)$ and $G\mathchar`-ig(\sigma_1,L_2)$ are also socles of $\Gamma_{\sigma_{12}}$, hence we get ${\rm EC}(D_v)=\{{\rm wt}(x^{i_x-1}y^{k_y+1}),{\rm wt}(x^{i_x-1}z^{j_z+1})\}$. By Remark \ref{rem:case3welldefined}, it follows that ${\rm EC}(D_v)\otimes \chi_1={\rm SC}(D_v)$. 

\begin{figure}[h]
  \centering
  \begin{tikzpicture}
\coordinate (v) at (0:0) ; %Divisiorに対応する点 
\coordinate (1) at (150:2) node at (1) [above]{$$};
\coordinate (2) at (-30:3) node at (2) [above]{};
\coordinate (3) at (210:3) node at (3) [left]{};
\coordinate (4) at (30:2) node at (4) [above]{};
\coordinate (A) at (-30:1.5); %
\coordinate (B) at (210:1.5);
\coordinate (C) at (-90:1.5) node at (C) [above] {$v$};
\draw (v) -- (1) node[pos=0.5] {};
\draw (v) -- (2) node[pos=1.1] {$L_3$}; % node[pos=0.35, sloped][above] {$x^d:y^b$};
\draw (v) -- (4) node[pos=0.5] {};
\draw (v) -- (3) node[pos=1.1] {$L_4$};
\draw ($ (A) !2! (C) $ ) -- ($ (C) !2! (A) $) node[pos=-0.05] {$L_2$}; 
\draw ($ (B) !2! (C) $ ) -- ($ (C) !2! (B) $) node[pos=-0.05] {$L_1$};

%錐の名前
\coordinate (sigma) at (-90:0.8) node at (sigma)  {$\sigma$};
\coordinate (sigma1) at (-135:2) node at (sigma1)  {$\sigma_1$};
\coordinate (sigma2) at (-45:2) node at (sigma2)  {$\sigma_2$};
\coordinate (sigma12) at (-90:2) node at (sigma12)  {$\sigma_{12}$};

\fill (v) circle (1.5pt)  (A) circle (1.5pt) (B) circle (1.5pt) (C) circle (2pt); %黒丸をつける（1つの\fillで複数の点を対象にできる）

      \end{tikzpicture}
  \caption{Vertex $v$ with two straight lines}
  \label{fig:twostraightline2}
\end{figure}

\begin{figure}
\centering
\begin{tikzpicture}
%%点：G-graphの外枠用
\coordinate (A) at (-3,0) node at (A) [below]{};
\coordinate (B) at (9,0) node at (B) [below]{};
\coordinate (C) at (6,5) node at (C) [above]{};
\coordinate (D) at (-1,5) node at (D) [above]{};
\coordinate  (E) at (-1,3);
\coordinate  (F) at (6,2);
\coordinate  (G) at (-3,3);
\coordinate  (H) at (9,2);

%%外枠の辺
\draw (A)--(G)--(E)--(D)--(C)--(F)--(H)--(B)--(A);

%S(Γ）の生成元用の座標
\coordinate  (xi) at (2.5,5) node at (xi) [above]{$x^{i_x}$};
\coordinate  (yj) at (9.5,0) node at (yj) [above]{$y^{j_y}$};
\coordinate  (zk) at (-3.5,0) node at (zk) [above]{$z^{k_z}$};
\coordinate  (O) at (2.4,0)  node at (O) [above]{$1$};
\coordinate  (x) at (2.4,0.7)  node at (x) [above]{$x$};
\coordinate  (y) at (3.2,0)  node at (y) [above]{$y$};
\coordinate  (z) at (1.6,0)  node at (z) [above]{$z$};

%内部のdot線用
\coordinate (1) at (2,5);
\coordinate (2) at (2,0);
\coordinate (3) at (2.8,5);
\coordinate (4) at (2.8,0);
\coordinate (5) at (2,0.6);
\coordinate (6) at (2.8,0.6);
\coordinate (7) at (2,1.2);
\coordinate (8) at (2.8,1.2);

\coordinate (9) at (-3,0.6);
\coordinate (10) at (9,0.6);
\coordinate (11) at (1.2,0);
\coordinate (12) at (3.6,0);
\coordinate (13) at (1.2,0.6);
\coordinate (14) at (3.6,0.6);

%内部の辺
\draw[line width=1pt, dotted]  (1)--(2);
\draw[line width=1pt, dotted]  (3)--(4);
%\draw[line width=1pt, dotted]  (5)--(6);
\draw[line width=1pt, dotted]  (7)--(8);
\draw[line width=1pt, dotted]  (9)--(10);
\draw[line width=1pt, dotted]  (13)--(11);
\draw[line width=1pt, dotted]  (12)--(14);

%z-valley
%\coordinate  (E) at (0,3);
\coordinate (a) at (-1,2.4);
\coordinate (b) at (0.1,2.4);
\coordinate (c) at (0.1,3);
\coordinate (zv) at (-0.45,2.4) node at (zv)[above]{$x^{j_x}z^{j_z}$};

\draw[line width=1pt, dotted]  (E)--(a)--(b)--(c)--(E);

%y-valley
%\coordinate  (F) at (6,2);
\coordinate (d) at (4.8,2);
\coordinate (e) at (4.8,1.4);
\coordinate (f) at (6,1.4);
\coordinate (yv) at (5.45,1.4) node at (yv)[above]{$x^{k_x}y^{k_y}$};

\draw[line width=1pt, dotted]  (F)--(d)--(e)--(f)--(F);

%spanの生成元（左下）つまりsocle
\coordinate (15) at (-3,2.4);
\coordinate (16) at (-2.4,2.4);
\coordinate (17) at (-2.4,3);

%\draw[line width=1pt]  (G)--(15)--(16)--(17)--(G);
% \fill[lightgray]   (G)--(15)--(16)--(17)--(G);

%spanの生成元（左上）
\coordinate (18) at (-1,4.4);
\coordinate (19) at (-0.4,4.4);
\coordinate (20) at (-0.4,5);

%\draw[line width=1pt]  (D)--(18)--(19)--(20)--(D);
 %\fill[lightgray]   (D)--(18)--(19)--(20)--(D);

%spanの生成元（右上）
 \coordinate (21) at (6,4.4);
\coordinate (22) at (5.4,4.4);
\coordinate (23) at (5.4,5);

%\draw[pattern={horizontal lines},pattern]  (C)--(21)--(22)--(23)--(C);
% \fill[lightgray]   (C)--(21)--(22)--(23)--(C);

%spanの生成元（左下）
 \coordinate (24) at (9,1.4);
\coordinate (25) at (8.4,1.4);
\coordinate (26) at (8.4,2);

\draw[line width=1pt]  (H)--(24)--(25)--(26)--(H);
 \fill[lightgray] (H)--(24)--(25)--(26)--(H);

%G-igsawで動く前のmonomial
\coordinate (27) at (8.4,0);

\draw[line width=1.5pt] (H)--(26)--(27)--(B)--(H);

%G-igsawで動いた後のmonomial 

\coordinate (28) at (-1.6,3);
\coordinate (29) at (-1.6,5);
\coordinate (30) at (-1,3);
\coordinate (31) at (-1.6,4.4);
\coordinate (32) at (-1,4.4);

\draw[line width=1pt]  (D)--(29)--(31)--(32)--(D);
 \fill[lightgray] (D)--(29)--(31)--(32)--(D);
\draw[pattern={north east lines},pattern]  (D)--(30)--(28)--(29)--(D);

%%%%移動したやじるし？
%\coordinate (tail) at (8.7,1);
%\draw[->] (tail)--(30);

      \end{tikzpicture}
\caption{$G$-igsaw transformation along with $L_1$ in the case $3$.}
\label{fig:G-igsaw}
\end{figure}

{\bf Case $4$}: We assume that $v$ is a vertex of valency $5$ with line segments $L_i$ for $i=1,\dots,5$. The segments $L_1$, $L_2$ and $L_3$ form a regular triangle, and $L_1$ has the monomial ratios  $x^{i_x}:y^{i_y}$.

\begin{figure}[h]
  \centering
  \begin{tikzpicture}
\coordinate (v) at (0:0) node at (v) [below] {$v$}; %Divisiorに対応する点 
\coordinate (1) at (150:2) node at (1) [above]{$L_1$};
\coordinate (2) at (-30:3) node at (2) [left]{$\bm{e}_3$};
\coordinate (3) at (210:2) node at (3) [left]{$L_5$};
\coordinate (4) at (20:3) node at (4) [above]{$L_3$};
\coordinate (5) at (70:2) node at (5) [above]{$L_2$};
\draw (v) -- (1) node[pos=0.5] {};
\draw (v) -- (2) node[pos=0.5] {} node[pos=0.5, sloped][above] {};
\draw (v) -- (4) node[pos=0.5] {};
\draw (v) -- (5) node[pos=0.5] {};
\draw (v) -- (3);

%錐の名前

\coordinate (sigma1) at (100:1) node at (sigma1)  {$\sigma_1$};
\coordinate (sigma2) at (50:1.2) node at (sigma2)  {$\sigma_2$};
\coordinate (sigma3) at (0:1.5) node at (sigma3)  {$\sigma_3$};
\coordinate (sigma4) at (-90:1) node at (sigma4)  {$\sigma_4$};
\coordinate (sigma5) at (180:1) node at (sigma5)  {$\sigma_5$};

\fill (v) circle (2pt)  (2) circle (1pt); %黒丸をつける（1つの\fillで複数の点を対象にできる）

      \end{tikzpicture}
  \caption{Vertex $v$ of valency $5$}
  \label{fig:valency5thm}
\end{figure}

 Let $\sigma_i$ be the three-dimensional cones with vertex $v$ for $i=1,\dots, 5$ as shown in Figure \ref{fig:valency5thm}. By the proof of Lemma \ref{degree}, the cones $\sigma_1$, $\sigma_2$ and $\sigma_3$ lie inside the regular triangle. In particular, the $G$-graph corresponding to $\sigma_2$ has two valleys. By Remark \ref{rem:two-valley}, we have 
 \[
\Gamma_{\sigma_2}={\rm Span}(x^{i_x-1}y^{k_y}, x^{k_x}y^{j_y-1}, x^{i_x-1}z^{j_z}, x^{j_x}z^{k_z-1}),
\]
and monomial ratios $L_2$ and $L_3$ are given by $y^{j_y-1}:x^{j_x+1}z^{j_z+1}$ and $z^{k_z-1}:x^{k_x+1}y^{k_y+1}$, respectively.
Since both $\sigma_1$ and $\sigma_3$ have only one valley, after the $G\mathchar`-$igsaw transformations along $L_2$ and $L_3$, one of the two corner generators that would create distinct valleys must coincide with the other along the boundary. This implies that either $k_z-1=j_z+1$ or $j_y-1=k_y+1$ holds. 
We may assume, without loss of generality, that $k_z-1=j_z+1$ and $j_y-1>k_y+1$. We next show that ${\rm EC}(D_v)={\rm wt}(x^{k_x}y^{j_y-1})$. It is clear that $x^{k_x}y^{j_y-1}$ is in the socle of $\Gamma_{\sigma_2}$.  
Since $x^{i_x-1}z^{j_z+1}$ is a corner monomial of $G\mathchar`-ig(\sigma_2,L_2)$ (hence lies in the socle of $G\mathchar`-ig(\sigma_2,L_2)$) and ${\rm wt}(x^{k_x}y^{j_y-1})={\rm wt}(x^{i_x-1}z^{j_z+1})$, Proposition \ref{rem:gigsawsocle} implies that the monomial $x^{i_x-1}z^{j_z+1}$ is in the socle of $\Gamma_{\sigma_1}$. 
Moreover, since the monomial ratio on $L_3$ is
$z^{k_z-1}:x^{k_x+1}y^{k_y+1}$, the $G\mathchar`-$igsaw transformation along $L_3$ replaces monomials divisible by $z^{k_z-1}$ by monomials divisible by $x^{k_x+1}y^{k_y+1}$. In particular, $x^{k_x}y^{j_y-1}\in \Gamma_{\sigma_2}\cap \Gamma_{\sigma_3}$. The inequality $j_y-1>k_y+1$ implies that $G\mathchar`-ig(\sigma_2,L_3)$ contains no monomial divisible by $x^{k_x}y^{j_y-1}$. Hence $x^{k_x}y^{j_y-1}$ lies in the socle of $\Gamma_{\sigma_3}$.
Therefore, the character of socle common to $\Gamma_{\sigma_1}$, $\Gamma_{\sigma_2}$ and $\Gamma_{\sigma_3}$ is ${\rm wt}(x^{k_x}y^{j_y-1})={\rm wt}(x^{i_x-1}z^{j_z+1})$. 
Similarly, we can check that under the $G\mathchar`-$igsaw transformation along $L_1$, a monomial with the same weight as ${\rm wt}(x^{k_x}y^{j_y-1})$ also lies in the socle of $\Gamma_{\sigma_4}$ and $\Gamma_{\sigma_5}$. It follows that ${\rm EC}(D_v)={\rm wt}(x^{k_x}y^{j_y-1})$.
 By definition of special characters, we have ${\rm SC}(D_v)={\rm wt}(x^{i_x}) \otimes {\rm wt}(z^{k_z-1})$.
Since $k_z-1=j_z+1$,  we have
\[
 {\rm wt}(x^{k_x}y^{j_y-1})\otimes \chi_1 = {\rm wt}(x^{i_x-1}z^{j_z+1})\otimes {\rm wt}(x) ={\rm wt}(x^{i_x}) \otimes {\rm wt}(z^{k_z-1}).
 \]

Thus, in every case we confirm that the essential characters satisfy
\[
{\rm EC}(D_v)\otimes \chi_1 = {\rm SC}(D_v).
\]
\qed
\\

\begin{cor}\label{cor:es-sc-terminal-global}
 Let $G$ be a cyclic group of type $\frac{1}{r}(1,a,r-a)$,  then we have ${\rm SC}(G)={\rm EC}(1)$.
\end{cor}
\proofname  
For compact exceptional divisors $D$, we have
\[
SC(G)=\bigcup_{D} SC(D).
\]
On the other hand, by Theorem~\ref{thm:es-sc-terminal}, for any compact exceptional divisor $D$ we have
\[
SC(D)=EC(D)\otimes\chi_1.
\]
Since $\chi_0$ is not in ${\rm EC}$ and others character are in ${\rm EC}(D)$ for some $D$,  $\left(\bigcup_{D} {\rm EC}(D)\right)$={\rm EC}
Therefore,
\[
{\rm SC}(G)=\bigcup_{D} \bigl({\rm EC}(D)\otimes\chi_1\bigr)
     =\left(\bigcup_{D} {\rm EC}(D)\right)\otimes\chi_1={\rm EC}\otimes\chi_1.
\]
We conclude that ${\rm SC}(G)={\rm EC}(1)$.
\qed

\subsection{Tautological line bundle on $\GHilb{3}$}
Let $X:=\GHilb{3}$.
Let $\mathcal Z\subset X\times \compl{3}$ be the universal family of $G$-clusters with projections
$p:\mathcal Z\to X$ and $q:\mathcal Z\to \compl{3}$.
The locally free sheaf $p_{*}\mathcal O_{\mathcal Z}$ carries a natural $G$-action.
For each character $\chi\in G^\vee$ we define the \emph{tautological sheaf} on $X$ by
\[
\mathcal R_\chi \ :=\ \Hom_G(\chi,\, p_{*}\mathcal O_{\mathcal Z}).
\]
In particular, $\mathcal R_{\chi_0}\cong \mathcal O_X$ for the trivial character $\chi_0$.

Since $p$ is finite and flat of degree $|G|$, the sheaf $p_*\mathcal O_{\mathcal Z}$
is locally free of rank $|G|$. Hence each $\mathcal R_\chi$ is a locally free sheaf of rank one on $X$.

Let $\Sigma$ be the toric fan of $X=G\text{-}\mathrm{Hilb}(\mathbb C^3)$, and let
$D=D_v$ be a compact exceptional divisor corresponding to a ray $v\in \Sigma(1)$.
Let ${\rm Star}(v)$ denote the set of three-dimensional cones $\sigma\in \Sigma(3)$ containing $v$.
Then
\[
\{U_{\bar\sigma}\}_{\sigma\in {\rm Star}(v)},
\qquad
U_{\bar\sigma}:=U_\sigma\cap D,
\]
is an affine open cover of $D$.

For each $\sigma\in\Sigma(3)$, let $\Gamma_\sigma$ be the corresponding $G$-graph.
By definition of a $G$-graph, for each character $\chi\in G^\vee$ there is a unique monomial
$m_{\chi,\sigma}\in \Gamma_\sigma$ of weight $\chi$.
This monomial determines a preferred local generator of $\mathcal R_\chi$ on $U_\sigma$, and hence
\[
\mathcal R_\chi|_{U_{\bar\sigma}}
\ \cong\
\mathcal O_{U_{\bar\sigma}}\cdot s_{\chi,\sigma},
\]
where $s_{\chi,\sigma}$ is represented by the distinguished monomial $m_{\chi,\sigma}$.
Thus, once $\Gamma_\sigma$ is fixed, the tautological sheaf $\mathcal R_\chi$ admits a preferred
local trivialisation on $U_{\bar\sigma}$ whose generator is represented by the unique monomial in
$\Gamma_\sigma$ of weight $\chi$.
%For a point $x=[Z]\in X$ corresponding to a $G$-cluster $Z$, the fibre $(\mathcal R_\chi)_x$
%identifies with the $\chi$-isotypic component of $H^0(Z,\mathcal O_Z)$.
%Similarly, for $y\in Y$ with fibre $Z_y$ of $\mathcal Z_Y\to Y$, the fibre $(R_\chi)_y$
%identifies with the $\chi$-isotypical component of $H^0(Z_y,\mathcal O_{Z_y})$.

\begin{prop}\label{prop:trivial-line-bundle}
  Let $D\subset X$ be a compact exceptional divisor and fix a character $\chi_D \in {\rm SC}(D)$. Then we have $\mathcal{R}_{\chi_D}\mid_D \cong \mathcal{O}_D$.
\end{prop}

\begin{proof}
We argue as in the proof of Theorem~\ref{thm:es-sc-terminal} by considering the four types of a compact exceptional divisor $D_v$.
To prove that ${\mathcal R}_{\chi_D}|_D\cong\mathcal O_D$ for $\chi=\chi_D\in {\rm SC}(D)$, it suffices to show that
there exists a monomial $X$ with ${\rm wt}(X)=\chi$
such that $X\in\Gamma_\sigma$ for every $\sigma\in{\rm Star}(v)$.
Indeed, if such an $X$ exists, then we may choose $s_{\chi,\sigma}=X|_{U_{\bar\sigma}}$
for all $\sigma$, and all transition functions on overlaps $U_{\bar\sigma}\cap U_{\bar\sigma'}$
are equal to $1$.

\medskip\noindent
\textbf{Case 1.}
In this case ${\rm SC}(D)=\{\chi_2\}$.
The three maximal cones in ${\rm Star}(v)$ correspond to the following $G$-graphs:
\[
\Gamma_1={\rm Span}(y^i,\ z^{r-i-1}),\qquad
\Gamma_2={\rm Span}(y^{i-1},\ z^{r-i}),\qquad
\Gamma_3={\rm Span}(x,\ y^{i-1},\ z^{r-i-1}).
\]
These $G$-graphs differ only in the monomial corresponding to $\chi_1$ (namely $x$, $y^i$, $z^{r-i}$).
Therefore, for every character $\chi\neq\chi_1$ the distinguished monomial generator is the same
for all $\Gamma_\ell$. In particular, for $\chi_2\in {\rm SC}(D)$ the corresponding monomial is common
to all $\Gamma_\ell$, and hence ${\mathcal R}_{\chi_2}|_D\cong\mathcal O_D$.

\medskip\noindent
\textbf{Case 2.}
Assume that $v$ is the intersection point of the two lines emanating from ${\bm e}_1$ and ${\bm e}_2$
(the case of ${\bm e}_1$ and ${\bm e}_3$ is analogous).
The monomial ratios along the lines ${\bm e}_2v$ and ${\bm e}_1v$ are $ x:z^k$ and $ y^i:z^{r-i}$.
First, suppose that $k-i>0$. Then we have
\[
{\rm SC}(D)=\{{\rm wt}(z^{r-i}\cdot z^k)\}=\{{\rm wt}(z^{k-i})\}=\{\chi_{1+ai}\}.
\]
Let $\sigma_1,\sigma_2$ be the two maximal cones having ${\bm e}_1v$ as a two-dimensional face.
The corresponding $G$-graphs are
\[
\Gamma_1={\rm Span}(y^i,\ z^{r-i-1}),\qquad
\Gamma_2={\rm Span}(y^{i-1},\ z^{r-i}).
\]
Since $r-i>k-i$, both $\Gamma_1$ and $\Gamma_2$ contain the monomial $z^{k-i}$.
Moreover, the $G$-igsaw transformation along the line ${\bm e}_2v$ (whose monomial ratio is $x:z^k$)
moves only monomials divisible by $z^k$. Hence $z^{k-i}$ is not moved, and therefore
$z^{k-i}$ appears in every $G$-graph $\Gamma_\sigma$ for $\sigma\in{\rm Star}(v)$.
This yields ${\mathcal R}_{\chi_{1+ai}}|_D\cong\mathcal O_D$.

Next suppose $k-i<0$. If $k+i<r$, then the monomial $y^{i-k}$ is contained in both $\Gamma_1$ and $\Gamma_2$.
Since the $G$-igsaw transformation along ${\bm e}_2v$ does not move monomials of type $y^t$,
the monomial $y^{i-k}$ is common to all $G$-graphs in ${\rm Star}(v)$.
The case $k+i>r$ can be treated in the same way (and the boundary case $k+i=r$
reduces to Case~1).

Finally, if $k-i=0$ then ${\rm SC}(D)=\{\chi_0\}$, and we may take the common
generator $X=1$.

\medskip\noindent
\textbf{Case 3.}
By the proof of Theorem~\ref{thm:es-sc-terminal}, there is a two-valley $G$-graph of the form
\[
\Gamma_\sigma
={\rm Span}\bigl(x^{i_x-1}y^{k_y},\ x^{k_x}y^{j_y-1},\ x^{i_x-1}z^{j_z},\ x^{j_x}z^{k_z-1}\bigr),
\]
for $\sigma\in{\rm Star}(v)$ (see Figure~\ref{fig:twostraightline2}), and moreover
\[
{\rm SC}(D)=\Bigl\{
{\rm wt}(x^{k_x+1}y^{j_y-1}),\ 
{\rm wt}(x^{j_x+1}z^{k_z-1})
\Bigr\}.
\]
Let the monomial ratios along $L_1$ and $L_2$ be $y^{j_y-1}:x^{j_x+1}z^{j_z+1}$ and $z^{k_z-1}:x^{k_x+1}y^{k_y+1}$.
The monomial ratio on $L_2$ gives ${\rm wt}(z^{k_z-1})={\rm wt}(x^{k_x+1}y^{k_y+1})$.
Since $j_y > k_y$ and ${\rm wt}(y)={\rm wt}(z^{-1})$, there exists an integer $\alpha$ such that
\[
{\rm wt}(x^{k_x+1}y^{j_y-1})={\rm wt}(z^\alpha),
\qquad\text{with}\qquad
k_z>\alpha>-j_y.
\]
If $\alpha>0$, then $z^\alpha\in\Gamma_\sigma$, and if $\alpha<0$, then $y^{-\alpha}\in\Gamma_\sigma$.
By the description of the $G$-igsaw transformations in the proof of Theorem~\ref{thm:es-sc-terminal}, this monomial (either $z^\alpha$ or $y^{-\alpha}$) is never moved, and hence it occurs in every $G$-graph $\Gamma_{\sigma'}$ for $\sigma'\in{\rm Star}(v)$.
Therefore it provides a common generator for ${\mathcal R}_{{\rm wt}(x^{k_x+1}y^{j_y-1})}|_D$.
The same argument, using the monomial ratio on $L_1$, applies to the other special character
${\rm wt}(x^{j_x+1}z^{k_z-1})$.

\medskip\noindent
\textbf{Case 4.}
In this case, we have ${\rm SC}(D)=\{{\rm wt}(x^{i_x}z^{k_z-1})\}$.
In Figure~\ref{fig:valency5thm} the monomial ratio along $L_1$ is $x^{i_x}:y^{i_y}$, hence
\[
{\rm wt}(x^{i_x}z^{k_z-1})
={\rm wt}(y^{i_y}z^{k_z-1})
={\rm wt}(y^{i_y-k_z+1}),
\]
again using ${\rm wt}(y)={\rm wt}(z^{-1})$.
As in Case~3, considering the two-valley cone $\sigma_2$, then $\Gamma_{\sigma_2}$ contains the monomial $y^{i_y-k_z+1}$ (or $z^{-i_y+k_z-1}$).
Moreover, by the proof of Theorem~\ref{thm:es-sc-terminal}, the $G$-igsaw transformations along
$L_1,\dots,L_5$ do not move the monomial $y^{i_y-k_z+1}$, so it is contained in every $G$-graph
in ${\rm Star}(v)$.
Thus $y^{i_y-k_z+1}$ is a common generator on $D$, and we conclude
${\mathcal R}_{{\rm wt}(x^{i_x}z^{k_z-1})}|_D\cong\mathcal O_D$.\\
This completes the proof in all cases.
\end{proof}

This proposition shows that ${\mathcal R}_{\chi_D}|_D$ is trivial in ${\rm Pic}(D)$, and hence
$c_1({\mathcal R}_{\chi_D}|_D)=0$ in $H^2(D,\mathbb Z)$.
In particular, $\deg({\mathcal R}_{\chi_D}|_C)=0$ for every complete curve $C\subset D$.

\begin{prop}\label{prop:connected-Dchi}
Let $G=\frac{1}{r}(1,a,r-a)\subset \glmc{3}$ and let
$X:=\GHilb{3}$.
For $\chi\in G^\vee\setminus\{\chi_0\}$, let
\[
\mathcal D_\chi \ :=\ \{\, D\subset X \mid D\ \text{is a compact exceptional toric divisor and } \chi \in {\rm SC}(D) \,\},
\qquad
D_\chi \ :=\ \bigcup_{D\in \mathcal D_\chi} D.
\]
Then $D_\chi$ is connected.
\end{prop}

\begin{proof}
By Theorem~\ref{thm:es-sc-terminal}, it suffices to prove the corresponding statement for
$$
D^{\mathrm{EC}}_\chi:=\bigcup_{\substack{D\subset X\\ \chi\in EC(D)}}D.
$$
%Indeed, for every nontrivial character $\chi\in G^\vee$,$D^{\mathrm{sc}}_\chi = D^{\mathrm{ec}}_{\chi\otimes\chi_1^{-1}}.$
We now prove that $D^{\mathrm{EC}}_\chi$ is connected on $X$.
Fix a nontrivial character $\chi\in G^\vee\setminus\{\chi_0\}$ and consider the triangulation of the junior simplex
for $X$.
Let $\Delta^{(1)}$ be the $1$-skeleton of this triangulation, and let $\Delta^{(1)}_\chi\subset\Delta^{(1)}$ be the subgraph
consisting of the lines marked by $\chi$.
We show that every $\chi$-line is connected in $\Delta^{(1)}_\chi$ to the cable; this implies that $\Delta^{(1)}_\chi$ is connected.

The monomial ratios on the cable are $z^i:y^{r-i}$, where $(i=1,\dots,r-1)$.
Then each nontrivial character appears exactly once on the cable.  
Let $\ell$ be any $\chi$-line. If $\ell$ lies on the cable there is nothing to prove. Otherwise, $\ell$ is incident to a
compact exceptional divisor $D$.
If $D$ is in Case~1, then necessarily $\chi=\chi_1$ and all lines in $D$ are marked by $\chi_1$, so $\ell$ is connected
inside $\Delta^{(1)}_{\chi_1}$ across $D$.
If $D$ is in Case~2 or Case~3, then $D$ contains a pair of distinct lines with the same character; hence there is
another $\chi$-line $\ell'\neq\ell$ incident to $D$, and we may pass through the vertex corresponding to $D$ to move from $\ell$
to $\ell'$ while staying inside $\Delta^{(1)}_\chi$.
If $D$ is in Case~4, then either $\ell$ belongs to one of the paired characters, or $\ell$ is the unique unpaired line. In the latter case, $\ell$ lies on the subdivision of the regular triangle in the hole and can be followed
(possibly through Case~3 divisors) until it meets a divisor on the frame, which is Case~2. Hence we again obtain a
$\chi$-line distinct from $\ell$ and continue. Iterating, we obtain a $\chi$-path in $\Delta^{(1)}_\chi$ from $\ell$ to the unique $\chi$-line on the cable. Thus $\Delta^{(1)}_\chi$ is connected.

Now let $\mathcal D^{\mathrm{EC}}_\chi$ be the set of compact exceptional divisors $D$ with $\chi\in {\rm EC}(D)$.
By the proof of Theorem \ref{thm:es-sc-terminal}, if $D$ is in Case~1, Case~2, or Case~4, then $D$ is incident to at least one line marked by $\chi$ (i.e.\ a $\chi$-line meets $D$).
If $D$ is in Case~3, then ${\rm EC}(D)$ consists of two characters. In the proof of Theorem \ref{thm:es-sc-terminal}, the three-dimensional cone
$\sigma$ shares the two-dimensional faces $\sigma\cap\sigma_1$ and $\sigma\cap\sigma_2$ with $\sigma_1$ and $\sigma_2$,
respectively. Let $D_1$ (resp.\ $D_2$) be the toric divisor corresponding to the other ray in the common face
$\sigma\cap\sigma_1$ (resp.\ $\sigma\cap\sigma_2$).
Then $D_1$ and $D_2$ share one of the two characters in ${\rm EC}(D)$:
that is, each character in ${\rm EC}(D)$ is also an element of ${\rm EC}(D_1)$ or ${\rm EC}(D_2)$.

Since the union of $\chi$-lines is connected, we can connect any two divisors in $\mathcal D^{\mathrm{EC}}_\chi$ by a chain of divisors
in $\mathcal D^{\mathrm{EC}}_\chi$ whose consecutive members meet along a $\chi$-line.
Therefore their union $D^{\mathrm{EC}}_\chi$ is connected. Thus $D_\chi$ is also connected.
\end{proof}

Propositions \ref{prop:trivial-line-bundle} and \ref{prop:connected-Dchi} imply the following corollary.

\begin{cor}\label{cor:trivial-line-bundle}
For every nontrivial character $\chi$ of $G$, there is an isomorphism
${\mathcal R}_{\chi}\mid_{D_{\chi}} \cong \mathcal{O}_{D_{\chi}}$.
\end{cor}

\subsection{Example}

\begin{ex}\upshape
Let $G=\frac{1}{10}(1,3,7)$. We set $v_i=\overline{g_i}$, $u_i=v_i+\bm{e}_1$ and $u_1^{\prime}=u_1+\bm{e}_1=\frac{1}{10}(21,3,7)$ for $i=1,\dots,9$.
Then the cross section of ${\rm Fan}(G)$ is shown Figure \ref{fig3}. 

There are $25$ three-dimensional cones and Table \ref{tb:137-10Ggraph} lists the $G$-graph corresponding to each three dimensional cone.
For example, since $\Gamma_{1}={\rm Span}(x^9)$, then the weight of socle of $\Gamma_{1}$ is ${\rm wt}(x^9)=\chi_9$.
Similarly, the vertex $v_1$ appears in five three-dimensional cones $\sigma_1, \sigma_2, \sigma_4, \sigma_5,$ and $\sigma_8$, and its associated essential character is $\chi_9$. On the other hand, $v_1$ is a vertex of valency $5$ (corresponding to case $4$). Since ${\rm wt}(x^3)=\chi_3$ and ${\rm wt}(z)=\chi_7$, the special character of $v_1$ is $\chi_3 \otimes \chi_7 =\chi_0$. Therefore, ${\rm EC}_{D_{v_1}} \otimes \chi_1 = {\rm SC}_{D_{v_1}} $ holds. Similar comparisons for all vertices (see Table 2) confirm the Theorem \ref{thm:es-sc-terminal} in this example.

We also verify Corollary~\ref{cor:trivial-line-bundle} in this example.
By Table~2, the character $\chi_7$ belongs to ${\rm SC}(D_{v_i})$ precisely for $i=2,6$.
Hence, we have $D_{\chi_7}=D_{v_2}\cup D_{v_6}$.
Now consider the cones in ${\rm Star}(v_2)\cup {\rm Star}(v_6)$, namely
$\sigma_2,\sigma_3,\sigma_5,\sigma_6,\sigma_9,\sigma_{23},$ and $\sigma_{24}$.
From the corresponding $G$-graphs in Table~1, we see that the monomial $z$ belongs to every one of them.
Since ${\rm wt}(z)=\chi_7$, the monomial $z$ gives a common local generator of ${\mathcal R}_{\chi_7}$ along
$D_{v_2}$ and $D_{v_6}$.
Therefore, we have
\[
{\mathcal R}_{\chi_7}|_{D_{\chi_7}} \cong \mathcal O_{D_{\chi_7}}.
\]

\begin{table}[htbp]
\centering

   \caption{$G$-graph for $G=\frac{1}{10}(1,3,7)$}
\begin{tabular}{|c|c|l|c|} \hline
             {\rm Cone} & {\rm Generator} & {\rm $G$-graph} &   {\rm Character of socle} \\ \hline
             $\sigma_1$ & $v_1, \bm{e}_2, \bm{e}_3$ & $\Gamma_1={\rm Span}(x^9)$ & $\chi_9$ \\ \hline
               $\sigma_2$ & $v_1, v_2, \bm{e}_2$ & $\Gamma_2={\rm Span}(x^6, x^2z)$ & $\chi_6, \chi_9$ \\ \hline
                 $\sigma_3$ & $v_2, v_3, \bm{e}_2$ & $\Gamma_3={\rm Span}(x^3, x^2z^2)$ & $\chi_3 ,\chi_6$ \\ \hline
                   $\sigma_4$ & $v_1, v_4, \bm{e}_3$ & $\Gamma_4={\rm Span}(y^3, x^2y^2)$ & $\chi_8, \chi_9$ \\ \hline
                     $\sigma_5$ & $v_1, v_2, v_5$ & $\Gamma_5={\rm Span}(y^2, x^2y,x^2z)$ & $\chi_5, \chi_6, \chi_9$ \\ \hline  
                     $\sigma_6$ & $v_2, v_3, v_6$ & $\Gamma_6={\rm Span}(x^2z^2,y)$ & $\chi_3, \chi_6$ \\ \hline
                       $\sigma_7$ & $v_4, v_7, \bm{e}_3$ & $\Gamma_7={\rm Span}(y^6, xy^2)$ & $\chi_3, \chi_6$ \\ \hline
                         $\sigma_8$ & $v_1, v_4, v_5, v_8$ & $\Gamma_8={\rm Span}(y^3,x^2y, xz)$ & $\chi_5, \chi_8, \chi_9$ \\ \hline
                           $\sigma_9$ & $v_2, v_5, v_6, v_9$ & $\Gamma_9={\rm Span}(y^2, x^2z , xz^2)$ & $\chi_5, \chi_6, \chi_9$ \\ \hline
                             $\sigma_{10}$ & $v_4,v_7,u_1, u_4$ & $\Gamma_{10}={\rm Span}(y^6, z ,xy)$ & $\chi_4, \chi_7, \chi_8$ \\ \hline
                               $\sigma_{11}$ & $v_4,v_8, u_1$ & $\Gamma_{11}={\rm Span}(y^5, xy, xz^2)$ & $\chi_4, \chi_5, \chi_8$ \\ \hline
                                 $\sigma_{12}$ & $v_5,v_8,v_9, u_2$ & $\Gamma_{12}={\rm Span}(x^2, y^3, xz^2)$ & $\chi_2, \chi_5, \chi_9$ \\ \hline
                                   $\sigma_{13}$ & $v_8,u_1,u_5, u_8$ & $\Gamma_{13}={\rm Span}(y^5,z^2, xz)$ & $\chi_4, \chi_5, \chi_8$ \\ \hline
                                     $\sigma_{14}$ & $v_8,u_2,u_5$ & $\Gamma_{14}={\rm Span}(y^4,xz^2)$ & $\chi_2, \chi_5$ \\ \hline
                                       $\sigma_{15}$ & $u_1,u_4, u_8, u_1^{\prime}$ & $\Gamma_{15}={\rm Span}(x, y^6, z^2)$ & $\chi_1, \chi_4, \chi_8$ \\ \hline
                                            $\sigma_{16}$ & $v_7, \bm{e}_1, \bm{e}_3$ & $\Gamma_{16}={\rm Span}(y^9)$ & $\chi_7$ \\ \hline
                                             $\sigma_{17}$ & $v_7, u_4, \bm{e}_1$ & $\Gamma_{17}={\rm Span}(y^8,z)$ & $\chi_4, \chi_7$ \\ \hline
                                              $\sigma_{18}$ & $u_1^{\prime}, u_4, \bm{e}_1$ & $\Gamma_{18}={\rm Span}(y^7,z^2)$ & $\chi_1, \chi_4$ \\ \hline
                                                $\sigma_{19}$ & $u_1^{\prime}, u_8, \bm{e}_1$ & $\Gamma_{19}={\rm Span}(y^6,z^3)$ & $\chi_1, \chi_8$ \\ \hline
                                                  $\sigma_{20}$ & $u_5, u_8, \bm{e}_1$ & $\Gamma_{20}={\rm Span}(y^5,z^4)$ & $\chi_5, \chi_8$ \\ \hline
                                                   $\sigma_{21}$ & $u_2, u_5, \bm{e}_1$ & $\Gamma_{21}={\rm Span}(y^4,z^5)$ & $\chi_2, \chi_5$ \\ \hline
                                                    $\sigma_{22}$ & $u_2, v_9, \bm{e}_1$ & $\Gamma_{22}={\rm Span}(y^3,z^6)$ & $\chi_2, \chi_9$ \\ \hline
                                                     $\sigma_{23}$ & $v_6, v_9, \bm{e}_1$ & $\Gamma_{23}={\rm Span}(y^2,z^7)$ & $\chi_6, \chi_9$ \\ \hline
                                                      $\sigma_{24}$ & $v_3, v_6, \bm{e}_1$ & $\Gamma_{24}={\rm Span}(y,z^8)$ & $\chi_3, \chi_6$ \\ \hline
                                                       $\sigma_{25}$ & $v_3, \bm{e}_1, \bm{e}_2$ & $\Gamma_{25}={\rm Span}(z^9)$ & $\chi_3$ \\ \hline

 \end{tabular}

  \label{tb:137-10Ggraph}
\end{table}

\begin{table}[htbp]
  \centering
  \caption{Special characters and essential characters}
  \label{tb:137-10sc-ec}
\begin{tabular}{|c|c|c|c|c|} \hline
{\rm Vertex }& {\rm Case }& {\rm Special characters}  & {\rm Star($v$)} &{\rm Essential characters}  \\ \hline
   $v_1$ & $4$ &  $\chi_3\otimes \chi_7=\chi_0$  & $\sigma_1, \sigma_2,\sigma_4, \sigma_5, \sigma_8$ & $\chi_9$\\ \hline
    $v_2$ & $4$ &  $\chi_3\otimes \chi_4=\chi_7$  & $\sigma_2, \sigma_3,\sigma_5, \sigma_6, \sigma_9$ & $\chi_6$\\ \hline
     $v_3$ & $2$ &  $\chi_1\otimes \chi_3=\chi_4$  & $\sigma_3, \sigma_6,\sigma_{24}, \sigma_{25} $ & $\chi_3$\\ \hline
      $v_4$ & $4$ &  $\chi_2\otimes \chi_7=\chi_9$  & $\sigma_4, \sigma_7,\sigma_8, \sigma_{10}, \sigma_{11}$ & $\chi_8$\\ \hline
       $v_5$ & $3$ &  $\chi_3\otimes \chi_4 \otimes \chi_9 =\chi_6\otimes \chi_0$  & $\sigma_5, \sigma_8,\sigma_9, \sigma_{12}$ & $\chi_5,\chi_9$\\ \hline 
       $v_6$ & $2$ &  $\chi_1\otimes \chi_6=\chi_7$  & $\sigma_6, \sigma_9,\sigma_{23}, \sigma_{24}$ & $\chi_6$\\ \hline
        $v_7$ & $2$ &  $\chi_1\otimes \chi_7=\chi_8$  & $\sigma_{7}, \sigma_{10},\sigma_{16}, \sigma_{17}$ & $\chi_7$\\ \hline
         $v_8$ & $4$ &  $\chi_2\otimes \chi_4=\chi_6$  & $\sigma_{8}, \sigma_{11},\sigma_{12}, \sigma_{13}, \sigma_{14}$ & $\chi_5$\\ \hline
          $v_9$ & $2$ &  $\chi_1\otimes \chi_9=\chi_0$  & $\sigma_{9}, \sigma_{12},\sigma_{22}, \sigma_{23}$ & $\chi_9$\\ \hline
           $u_1$ & $3$ &  $\chi_2\otimes \chi_4 \otimes \chi_8 = \chi_5 \otimes \chi_9$  & $\sigma_{10}, \sigma_{11},\sigma_{13}, \sigma_{15}$ & $\chi_4, \chi_8$\\ \hline
            $u_2$ & $2$ &  $\chi_1\otimes \chi_2=\chi_3$  & $\sigma_{12}, \sigma_{14},\sigma_{21}, \sigma_{22}$ & $\chi_2$\\ \hline
             $u_4$ & $2$ &  $\chi_1\otimes \chi_4=\chi_5$  & $\sigma_{10}, \sigma_{15},\sigma_{17}, \sigma_{18}$ & $\chi_4$\\ \hline
              $u_5$ & $2$ &  $\chi_1\otimes \chi_5=\chi_6$  & $\sigma_{13}, \sigma_{14},\sigma_{20}, \sigma_{21}$ & $\chi_5$\\ \hline
               $u_8$ & $2$ &  $\chi_1\otimes \chi_8=\chi_9$  & $\sigma_{13}, \sigma_{15},\sigma_{18}, \sigma_{20}$ & $\chi_8$\\ \hline
                $u^{\prime}_1$ & $1$ &  $\chi_1\otimes \chi_1=\chi_2$  & $\sigma_{15}, \sigma_{18},\sigma_{19}$ & $\chi_1$\\ \hline
  \end{tabular}
  
\end{table}
  
  \end{ex}

For $G=\frac{1}{r}(1,a,r-a)$, we have $\chi_{1+a+r-a}=\chi_1$. This observation motivates the following conjecture as a generalization of Proposition \ref{prop:es-sc-two} and Theorem \ref{thm:es-sc-terminal} .

\begin{conj}\label{ess-sp-conj}
Let $G$ be a cyclic group of type $\frac{1}{r}(a,b,c)$.
Then ${\rm EC}(a+b+c)$ corresponds to the exceptional curves of $\GHilb{3}$.
%For a compact irreducible divisor $D$ of $\GHilb{3}$,  we have $EC(D) \otimes \chi_{a+b+c} = SC(D) $.
\end{conj}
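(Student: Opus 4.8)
The plan is to replay the strategy behind Theorem~\ref{thm:es-sc-terminal} and Proposition~\ref{prop:es-sc-two} in the general cyclic setting, the guiding principle being that the twisting index $a+b+c$ is simply the weight ${\rm wt}(xyz)=\chi_{a+b+c}$ of the monomial $xyz$. This single observation unifies the cases already treated: for $G\subset\slmc{3}$ one has $a+b+c\equiv 0$, so ${\rm EC}(a+b+c)={\rm EC}$ and the statement should reduce to the original Reid's recipe \cite{Reid2}; in two dimensions it recovers Proposition~\ref{prop:es-sc-two}; and for the terminal family $\frac1r(1,a,r-a)$ one has $a+b+c\equiv 1$, which is exactly Theorem~\ref{thm:es-sc-terminal}. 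First I would make precise the meaning of ``corresponds'' as the existence of a labelling of the compact exceptional curves of $\GHilb{3}$ by the characters in ${\rm EC}(a+b+c)$, realized through a special-character decoration of the fan as in Section~\ref{sec:specialcharacter}.

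The first concrete step is to produce a combinatorial model of $\GHilb{3}$ for an arbitrary $\frac1r(a,b,c)$. By Theorem~\ref{Nakamura} the normalization of $\GHilb{3}$ is the toric variety of $\mathrm{Fan}(G)$, assembled from the cones $\sigma(\Gamma)$ attached to $G$-graphs, so I would build this fan following the Craw--Reid algorithm \cite{CrawReid} on all three boundary edges, dropping the simplifying hypothesis $\GCD(r,a)=1$ that made only the edge $\bm{e}_2\bm{e}_3$ behave like a Gorenstein surface quotient. Because $\GHilb{3}$ need no longer carry only conifold singularities (Lemma~\ref{classification 1}), the diagonal subdivision of Section~2 must be replaced by a general subdivision of the non-smooth cones; the compact exceptional curves are then the bounded one-dimensional cones of the resulting fan, and the monomial ratios on the incident walls supply the raw data for the decoration.

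With such a model in hand, the correspondence itself should follow the same local mechanism as the proof of Theorem~\ref{thm:es-sc-terminal}. For a compact curve $D_\tau$ I would compare the $G$-graphs of the three-dimensional cones meeting along $\tau$ by repeatedly applying $G$-igsaw transformations across the incident walls, tracking how the socle characters propagate by means of Remark~\ref{rem:gigsawsocle} together with the two-valley description of Remark~\ref{rem:two-valley}. The algebraic identity to extract is that, as one walks around $\tau$, the successive monomial ratios on the walls multiply to the weight ${\rm wt}(xyz)$; equivalently, the common socle character underlying ${\rm EC}(D_\tau)$ and the decoration ${\rm SC}(D_\tau)$ read off from those ratios differ exactly by the tensor factor $\chi_{a+b+c}$. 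This telescoping of ratios around a vertex is precisely what produced the $\chi_1$ twist in Cases~1--4 above, and the heart of the matter is to show that it survives when the local geometry becomes more complicated.

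The main obstacle is that the clean local classification on which Theorem~\ref{thm:es-sc-terminal} rests does not persist in general. Lemma~\ref{degree} bounds the vertex valencies by $5$ and Lemma~\ref{classification 1} restricts the singularities to conifolds only because of the very special shape $\frac1r(1,a,r-a)$; for a generic $\frac1r(a,b,c)$ one must expect higher-valency vertices, longer chains of $G$-igsaw transformations, and cones with more than two valleys, so the finite analysis of four configurations is no longer available. The decisive point is therefore to replace the case-by-case verification by a single uniform statement -- a \emph{wall formula} asserting that crossing any wall alters the surviving socle character exactly by the character of its monomial ratio, together with a global closing relation around each curve forcing the total twist to be $\chi_{a+b+c}$. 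Establishing this wall formula in full generality, and first of all giving a well-defined general notion of special character extending the decoration rules of Section~\ref{sec:specialcharacter}, is where the real difficulty lies; a conceptual alternative would be to read the twist off the Serre functor in the derived McKay equivalence \cite{BKR}, combined with the $0$-generated module description of essential characters \cite{CIK}, which would bypass the toric case analysis at the cost of identifying ${\rm EC}(a+b+c)$ with the corresponding tautological data.
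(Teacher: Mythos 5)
The statement you are trying to prove is a \emph{conjecture} in the paper: the authors give no proof of it, only a consistency check in the example $G=\tfrac{1}{7}(1,2,3)$ (plus the observation that it specializes to Theorem~\ref{thm:es-sc-terminal} when $a+b+c\equiv 1$ and to Reid's recipe when $a+b+c\equiv 0$). So there is no proof in the paper to compare against, and your proposal does not close the gap either: it is a research plan whose two essential ingredients are explicitly deferred, namely (i) a well-defined notion of special character for arbitrary $\tfrac1r(a,b,c)$ extending Section~\ref{sec:specialcharacter}, and (ii) the ``wall formula'' plus the closing relation around each curve. Everything that would constitute a proof is packed into these two unproven steps, so as it stands the proposal establishes nothing beyond the heuristics the paper itself already records.

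Two of your concrete suggestions would in fact fail. First, you propose to build $\mathrm{Fan}(G)$ by running the Craw--Reid algorithm ``on all three boundary edges, dropping the simplifying hypothesis''; but the paper's own example $\tfrac15(1,2,3)$ shows exactly that this knock-out procedure breaks down in the non-Gorenstein setting --- a zero appears in the cyclic continued fraction and the reduction to $[[1,1,1]]$ is impossible. The paper's Construction~\ref{const} only rescues this for the family $\tfrac1r(1,a,r-a)$ by exploiting the lower subsequence along the single Gorenstein-like edge $\bm{e}_2\bm{e}_3$; no analogue is available for general $(a,b,c)$, and your plan gives none. Second, your fallback via the Serre functor in the derived equivalence of \cite{BKR} is not available here: that theorem requires $G\subset\slmc{3}$ (the crepant, Gorenstein case), whereas the conjecture is interesting precisely for non-Gorenstein $G$, where $\GHilb{3}$ is not a crepant resolution and the cited equivalence does not apply. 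A correct treatment would also have to handle the fact that Lemma~\ref{classification 1} (conifold singularities only, hence the existence of a small resolution) and Lemma~\ref{degree} (valency at most $5$) are both special to the terminal family, so the entire local apparatus of Cases 1--4 must be rebuilt, not merely ``replaced by a uniform statement'' whose proof is left open.
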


 In particular, if $G$ is a cyclic group of $\slmc{3}$, then we have $\chi_{a+b+c}=\chi_0$. In this case, the conjecture recovers Reid's recipe. We will introduce an example where this conjecture holds true.

\begin{ex}\upshape
 Let $G=\frac{1}{7}(1,2,3)$, so that $a+b+c=6$. The fan ${\rm Fan}(G)$ is shown in Figure \ref{fig:123-7}, where $u_1=v_1+\bm{e}_1$. According to Conjecture \ref{ess-sp-conj}, we will show that  ${\rm EC}(6)$ is determined by exceptional curves of $\GHilb{3}$.  First, we consider the vertex $v_3=\dfrac{1}{7}(3,6,2)$ which has valency $3$. By Table \ref{tab:123-7}, there are three cones $\sigma_2, \sigma_3, \sigma_{10}$ which contain the vertex $v_3$. Since all of their socle characters contain $\chi_5$, the essential characters ${\rm EC}_{D_{v_3}}$ is $\chi_5$. On the other hand, $v_3$ has three valencies $L_1, L_2$ and $ L_3$ with monomial ratios $x^2:z^3$, $x^2:y$ and $y:z^3$, respectively. These monomial ratios correspond to a character $\chi_2$. As in case $1$, we decorate $v_3$ with the character $\chi_2 \otimes \chi_2=\chi_4$. Since ${\rm EC}_{D_{v_3}} \otimes \chi_6=\chi_4$, the conjecture holds for the vertex $v_3$. Similarly, we can see that the conjecture holds for the other vertices.
 
\begin{figure}[htp]
  \centering
\begin{tikzpicture}

\coordinate (A) at (0,0) node at (A) [below]{$7\bm{e}_2$};
\coordinate (B) at (0:9) node at (B) [below]{$7\bm{e}_3$};
\coordinate (C) at (60:9) node at (C) [above]{$7\bm{e}_1$};
\coordinate (3) at (36:2.8) node at (3) [right]{$v_3$};% node at (1)[below=0.9mm] {\footnotesize$\frac{1}{7}(1,2,3)$};
\coordinate (5) at ($(C)!.5!(3)$) node at (5) [left]{$v_5$} ;
\coordinate (1) at ($(B)!.5!(3)$) node at (1) [below]{$v_1$} ;
\coordinate (8) at ($(C)!.4!(1)$) node at (8) [right]{$u_1$} ;
\coordinate (4) at ($(8)!.5!(B)$) node at (4) [left]{$v_4$};% node at (9) [below]{$v_9$};
%\coordinate (3) at ($(B)!.6!(9)$);% node at (3) [below] {$v_3$};%1:3の点
\draw (A) -- (B);
\draw (B) -- (C);
\draw (C) -- (A);
\draw (1) -- (A) node[pos=0.5] [fill=white, sloped]{$x^3:z$};
\draw (A) -- (3) node[pos=0.5] [fill=white, sloped]{$x^2:z^3$};
\draw (B) -- (3) node[pos=0.3] [fill=white, sloped]{$x^2:y$};
\draw (4) -- (1) node[pos=0.5] [fill=white, sloped]{$xy:z$};
\draw (C) -- (3) node[pos=0.7] [fill=white, sloped]{$y:z^3$};
\draw (A) -- (5) node[pos=0.6] [fill=white, sloped]{$x:z^5$};
\draw (1) -- (5) node[pos=0.5] [fill=white, sloped]{$xz:y^2$};
\draw (C) -- (5) ;
\draw (B) -- (4) node[pos=0.55] [fill=white, sloped]{$x:y^4$};
\draw (C) -- (4) node[pos=0.7] [fill=white, sloped]{$y^5:z$};
\draw (5) -- (8) node[pos=0.5] [fill=white, sloped]{$x:yz^2$};
\draw (8) -- (C);
\draw (8) -- (4);
\draw (8) -- (1) node[pos=0.5] [fill=white, sloped]{$y^3:z^2$};

\fill (C) circle (2pt) (4) circle (1pt)  (8) circle (1pt) (5) circle (1pt) (A) circle (2pt) (B) circle (2pt)  (3) circle (1pt) (1) circle (1pt) ; %黒丸をつける（1つの\fillで複数の点を対象にできる）

%3次元錐の名前
    
      \end{tikzpicture}
       
  \caption{$G=\frac{1}{7}(1,2,3)$}
  \label{fig:123-7}
\end{figure}

\begin{table}[htbp]
\centering

   \caption{$G$-graph for $G=\frac{1}{7}(1,2,3)$}
\begin{tabular}{|c|c|l|c|} \hline
             {\rm Cone} & {\rm Generator} & {\rm $G$-graph} &   {\rm Character of socle} \\ \hline
             $\sigma_1$ & $v_1, \bm{e}_2, \bm{e}_3$ & $\Gamma_1={\rm Span}(x^7)$ & $\chi_6$ \\ \hline
               $\sigma_2$ & $v_1, v_3, \bm{e}_2$ & $\Gamma_2={\rm Span}(x^2z, z^2)$ & $\chi_6, \chi_5$ \\ \hline
                 $\sigma_3$ & $v_3, v_5, \bm{e}_2$ & $\Gamma_3={\rm Span}(xz, z^4)$ & $\chi_5 ,\chi_4$ \\ \hline
                   $\sigma_4$ & $v_5, \bm{e}_1, \bm{e}_2$ & $\Gamma_4={\rm Span}(z^6)$ & $\chi_4$ \\ \hline
                     $\sigma_5$ & $v_1, v_4, \bm{e}_3$ & $\Gamma_5={\rm Span}(xy^2,y^3)$ & $\chi_5, \chi_6$ \\ \hline  
                     $\sigma_6$ & $v_4, \bm{e}_1, \bm{e}_3$ & $\Gamma_6={\rm Span}(y^6)$ & $\chi_5$ \\ \hline
                       $\sigma_7$ & $v_1, v_4, u_1$ & $\Gamma_7={\rm Span}(x,y^3,yz,z)$ & $\chi_1, \chi_3, \chi_5, \chi_6$ \\ \hline
                         $\sigma_8$ & $v_1, v_5, u_1$ & $\Gamma_8={\rm Span}(x,y^2,yz,z^2)$ & $\chi_1, \chi_4, \chi_5, \chi_6$ \\ \hline
                           $\sigma_9$ & $v_1, v_3, v_5$ & $\Gamma_9={\rm Span}(xz,yz,z^2)$ & $\chi_4, \chi_5, \chi_6$ \\ \hline
                             $\sigma_{10}$ & $v_4,u_1,\bm{e}_1$ & $\Gamma_{10}={\rm Span}(y^4,yz)$ & $\chi_1, \chi_5$ \\ \hline
                               $\sigma_{11}$ & $v_5,u_1, \bm{e}_1$ & $\Gamma_{11}={\rm Span}(y^2,yz,z^2)$ & $\chi_1, \chi_4$ \\ \hline

 \end{tabular}
  \label{tab:123-7}
 \end{table}

\end{ex}

% Let us call it $D_\chi$.

\newpage

\end{document}